\newtheorem*{main}{Theorem}
\newtheorem{theo}{Theorem}[section]
\newtheorem{lemma}[theo]{Lemma}
\newtheorem{cor}[theo]{Corollary}
\newtheorem{prop}[theo]{Proposition}
\theoremstyle{definition}
\newtheorem{defi}[theo]{Definition}
\newtheorem{remark}[theo]{Remark}
\newtheorem{example}[theo]{Example}
\newcommand{\e}{\varepsilon}
\newcommand{\B}{{\mathbb{B}}}
\newcommand{\C}{{\mathbb{C}}}
\newcommand{\R}{{\mathbb{R}}}
\newcommand{\Rp}{\underline{\mathbb{R}}}
\newcommand{\BS}{\mathbb{S}}
\newcommand{\CC}{\mathcal{C}}
\newcommand{\LL}{\mathcal{L}}
\newcommand{\Z}{{\mathbb{Z}}}
\begin{document}
\title[]{Normalized Milnor Fibrations for Real Analytic Maps}

\author[J.~L.~Cisneros-Molina]{Jos\'e Luis Cisneros-Molina}
\address{Instituto de Matem\'aticas, Unidad Cuernavaca\\ Universidad Nacional Aut\'onoma de M\'exico\\ Avenida Universidad s/n, Colonia Lomas
de Chamilpa\\ Cuernavaca, Morelos, Mexico.}
\curraddr{}
\email{jlcisneros@im.unam.mx}

\author[A.~Menegon]{Aur\'elio Menegon}
\address{Mid Sweden University \\ Department of Engineering, Mathematics and Science Education \\ 85170 Sundsvall, Sweden}
\curraddr{}
\email{aurelio.menegon@miun.se}

\begin{abstract}
Milnor’s fibration theorem and its generalizations play a central role in the
study of singularities of complex and real analytic maps.
In the complex analytic case, the Milnor fibration on the sphere is always
given by the normalized map $f/|f|$.
In contrast, for real analytic maps the existence of such a normalized Milnor
fibration generally fails, even when a Milnor--L\^e fibration exists on a tube.

For locally surjective real analytic maps $f \colon (\mathbb{R}^n,0)\to
(\mathbb{R}^k,0)$ with isolated critical value, the existence of a Milnor--L\^e
fibration on a tube is guaranteed under a transversality condition.
However, the associated fibration on the sphere need not be given by the
normalized map $f/\|f\|$, unless an additional regularity condition
($d$-regularity) is imposed.

In this paper we show that this apparent obstruction is not intrinsic.
More precisely, we prove that for any such map satisfying the transversality
property, there exists a homeomorphism $h\colon (\mathbb{R}^k,0) \to (\mathbb{R}^k,0)$
of the target space such that the composition $h^{-1}\circ f$ becomes
$d$-regular.
As a consequence, the normalized map
\[
\frac{h^{-1}\circ f}{\|h^{-1}\circ f\|}\colon
\mathbb{S}^{n-1}_\varepsilon\setminus f^{-1}(0)\longrightarrow \mathbb{S}^{k-1}
\]
defines a smooth locally trivial fibration.
This fibration is equivalent to both the Milnor--L\^e fibration on the tube
and the Milnor fibration on the sphere.

Our result reveals a closer topological parallel between real and complex analytic singularities
than previously recognized, without changing the topological type of the
singularity.
\end{abstract}

\maketitle

\section{Introduction}\label{sec:introduction}

Milnor’s fibration theorem plays a central role in the study of singularities of complex analytic functions, see for instance \cite{Cisneros-Seade:MFTRCS} for an overview of its origin, generalizations and connections with other bran\-ches of mathematics.
To each holomorphic function $f\colon(\C^n,0)\to(\C,0)$ with a critical point at the origin $0\in\C^n$, it associates a locally trivial fibration \cite[Theorem~4.8]{Milnor:SPCH}, called the \textit{Milnor fibration}, given by
\begin{equation}\label{eq:CMF}
\frac{f}{\vert f\vert}\colon \BS_\varepsilon\setminus f^{-1}(0)\to\BS^1,
\end{equation}
where $\BS_\varepsilon$ is a sufficiently small sphere around $0\in\C^n$.

There is a second locally trivial fibration, called the \textit{Milnor-Lê fibration}, its total space is a \textit{Milnor tube}
\begin{equation*}
T_{\varepsilon,\delta}(f):=\B_\varepsilon\cap f^{-1}(\BS^1_\delta),
\end{equation*}
where $\B_\varepsilon$ is the ball bounded by the sphere $\BS_\varepsilon$ and $\BS^1_\delta$ is the circle of radius $\delta$ centered at the origin of $\C$, and its base space is $\BS^1_\delta$, that is, when $0<\delta\ll\varepsilon$ the restriction
\begin{equation}\label{eq:CMLF}
 f\colon T_{\varepsilon,\delta}(f)\to\BS^1_\delta
\end{equation}
is a locally trivial fibration. Its existence was proved by Milnor \cite[Theorem~11.2]{Milnor:SPCH} when $f$ has an isolated critical point at $0\in\C^n$ and by Lê in \cite[Theorem~(1.1)]{Le:SRRM} for the general case, both using Ehresmann Fibration Theorem for manifolds with boundary.

Fibrations \eqref{eq:CMF} and \eqref{eq:CMLF} are equivalent, this is proved using a smooth vector ﬁeld on $\B_\varepsilon\setminus f^{-1}(0)$ constructed by Milnor in \cite[Lemma~5.4]{Milnor:SPCH} to ``inflate'' the Milnor tube to the sphere, i.e. following the ﬂow of the vector ﬁeld one gets a diﬀeomorphism between the Milnor tube and the complement on the sphere of a neighborhood of the link.
This vector field has the following properties:
\begin{enumerate}[1.]
 \item It is transverse to the spheres centered at the origin contained in $\B_\varepsilon$;\label{it:ts}
 \item It is transverse to the Milnor tubes;\label{it:tt}
 \item Given any integral curve $p(t)$ of such vector field $\frac{f(p(t))}{\Vert f(p(t))\Vert}$ is a constant point $\theta\in\BS^1\subset\C$, that is, $p(t)$ is projected by $f$ on the ray $\LL_\theta$ from $0$ passing through $\theta\in\BS^1$ in $\C$.\label{it:ca}
\end{enumerate}
See \cite{Aguilar-Cisneros:GVMFT} for the proofs of Milnor Fibration Theorem, Milnor-Lê Fibration Theorem and the equivalence of these two fibrations, emphasizing the ideas of differential topology involved.

Milnor also proved Fibration Theorems for real analytic maps
\begin{equation*}
 f \colon (\mathbb{R}^n,0) \to (\mathbb{R}^k,0), \qquad 2 \le k \le n,
\end{equation*}
with \textit{isolated critical point} at $0\in\R^n$. First he proved the existence of a locally trivial fibration on a Milnor tube $T_{\varepsilon,\delta}(f):=\B_\varepsilon^n\cap f^{-1}(\BS_\delta^{k-1})$
(Minor-Lê fibration)\cite[Theorem~2]{Milnor:ISH} or \cite[Theorem~11.2]{Milnor:SPCH}
\begin{equation}\label{eq:RMLF}
f\colon T_{\varepsilon,\delta}(f)\to\BS_\delta^{k-1},
\end{equation}
where $\BS_\delta^{k-1}$ is the sphere of radius $\delta$ centered at $0\in\R^k$ and $0<\delta\ll\varepsilon$. Then he proved that on the sphere $\BS^{n-1}_\varepsilon=\partial\B_\varepsilon^n$ there is a locally trivial fibration
\begin{equation}\label{eq:RMF}
\varphi_M \colon \BS^{n-1}_\varepsilon \setminus f^{-1}(0) \longrightarrow \BS^{k-1},
\end{equation}
constructing a vector field on $\B_\varepsilon^n\setminus f^{-1}(0)$ which is transverse to the spheres centered at the origin contained in $\B_\varepsilon^n$ and it is transverse to the Milnor tubes,
to “inﬂate” the Milnor tube to the sphere \cite[Lemma~11.3]{Milnor:SPCH}. These fibration theorems have some weaknesses: the maps $f$ with isolated critical point are highly non-generic, so it is difficult to find examples, the projection $\varphi_M$ cannot be explicitly described and in general it is not true that the map
\begin{equation}\label{eq:SMF}
\frac{f}{\Vert f\Vert}\colon \BS^{n-1}_\varepsilon \setminus f^{-1}(0) \longrightarrow \BS^{k-1}
\end{equation}
is the projection of a locally trivial fibration \cite[page~99]{Milnor:SPCH} because in general it is not a submersion, which is a necessary condition to be a smooth ﬁber bundle.
When the map \eqref{eq:SMF} is a locally trivial fibration, in \cite{Ruas-Seade-Verjovsky:RSMF} the authors said that $f$ satisfies the \textit{strong Milnor condition}, they
gave examples of these maps and study the stability of this condition under perturbations of $f$. When \eqref{eq:SMF} is a locally trivial fibration we call it a \textit{normalized Milnor fibration}.

In \cite{Cisneros-Seade-Snoussi:d-regular} the Milnor-Lê fibration \eqref{eq:RMLF} was generalized when $f \colon (\mathbb{R}^n,0) \to (\mathbb{R}^k,0)$,  with $2 \leq k \leq n$, is a locally surjective real analytic map with an \textit{isolated critical value} at $0\in\R^k$, satisfying the following \textit{transversality property:} there exist a \textit{solid Milnor tube}
$\widehat{T}_{\varepsilon,\delta}(f):=\B_\varepsilon^n\cap f^{-1}(\B_\delta^k)$, with $0<\delta\ll\varepsilon$, such that all the ﬁbers in the tube are transverse to $\BS_\varepsilon^{n-1}$, to be able to apply Ehresmann Fibration Theorem for manifolds with boundary \cite[Theorem~5.1, Remark~5.7]{Cisneros-Seade-Snoussi:d-regular}.
Having the ﬁbration on the tube, one can use Milnor’s vector ﬁeld to inﬂate the tube to the sphere to get a ﬁbration \eqref{eq:RMF} on the sphere, but again, not necessarily with projection $\frac{f}{\Vert f\Vert}$.
In \cite[Definition~2.4]{Cisneros-Seade-Snoussi:d-regular} the concept of \textit{$d$-regularity} was introduced, and it was proved \cite[Proposition 3.2(4)]{Cisneros-Seade-Snoussi:d-regular}
that $f$ is $d$-regular if and only if the map \eqref{eq:SMF} is a submersion, so $d$-regularity is a necessary condition for \eqref{eq:SMF} to be a ﬁber bundle. On the other hand in
\cite{Cisneros-Seade-Snoussi:d-regular,Cisneros-Menegon:EMFMLF,Cisneros-Menegon:EEMMLF} it was also proved that $f$ is $d$-regular if and only if there exist a vector field in $\B_\varepsilon^n\setminus f^{-1}(0)$ which satisfies properties \ref{it:ts}, \ref{it:tt} and \ref{it:ca}, which implies that ``inﬂating'' the Milnor tube to the sphere, the map \eqref{eq:SMF} is a ﬁber bundle equivalent to the ﬁber bundle \eqref{eq:RMLF}, so $d$-regularity is a necessary and suﬃcient condition for $f$ to have a normalized Milnor fibration \eqref{eq:SMF}. In this case we also have fibration \eqref{eq:RMF} which it is equivalent to fibration \eqref{eq:SMF}, since both are equivalent to fibration \eqref{eq:RMLF}.

However, many locally surjective real analytic maps with isolated critical value satisfying the transversality property fail to be $d$-regular.
The main purpose of this paper is to show that this apparent obstruction can always be removed by a suitable change of coordinates in the target space.
More precisely, there \textit{always} exists a suitable homeomorphism $h$ of the target such that the composition $f_h := h^{-1}\circ f$ is $d$-regular, i.~e., it has a normalized Milnor fibration given by
\begin{equation}\label{eq:hSMF}
\frac{f_h}{\Vert f_h\Vert}\colon \BS^{n-1}_\varepsilon \setminus f^{-1}(0) \longrightarrow \BS^{k-1}.
\end{equation}

One of the surprising aspects of the present work is that it reveals a much closer relationship between the real and the complex settings than previously expected.
In the complex analytic case, Milnor showed that the fibration \eqref{eq:CMF} on the sphere is always given by the normalized map $f/\vert f\vert$.
In contrast, in the real analytic case it has long been understood that such a simple description generally fails, unless strong regularity conditions are imposed.
Our results show that this difference is, in fact, less intrinsic than it might appear.

It is worth emphasizing that this modification does not alter the singularity type of the map, as the map $f_h$ is topologically $\mathcal{A}$-equivalent to $f$, being obtained by composition with a homeomorphism of the target.

The main novelty of this work is to show that the obstruction to normalized
Milnor fibrations in the real analytic setting is not intrinsic, but can
always be removed by a suitable topological change of coordinates in the
target. From a topological viewpoint, this shows that the failure of
normalized Milnor fibrations for real analytic maps is not a genuine
obstruction, but rather an artifact of the chosen target coordinates.
As a consequence, real analytic Milnor fibrations turn out to be much
closer to the complex case than previously expected. To our knowledge,
this provides the first general result ensuring the existence of normalized
Milnor fibrations for real analytic maps with isolated critical value.

More precisely, we prove the following result.

\begin{main} \label{principal}
Let $f \colon (\mathbb{R}^n,0) \to (\mathbb{R}^k,0)$, with $2 \le k \le n$, be a locally surjective real analytic map with an isolated critical value at $0$.
Assume moreover that $f$ satisfies the transversality property. Then there exists a homeomorphism
\begin{equation*}
h \colon (\mathbb{R}^k,0) \to (\mathbb{R}^k,0)
\end{equation*}
such that the map
\begin{equation*}
 \frac{h^{-1}\circ f}{\|h^{-1}\circ f\|}\colon\BS^{n-1}_\varepsilon \setminus f^{-1}(0) \longrightarrow \BS^{k-1},
\end{equation*}
is a smooth locally trivial fibration. Moreover, this fibration is equivalent to both the Milnor--L\^e fibration on the tube \eqref{eq:RMLF} and the Milnor fibration on the sphere \eqref{eq:RMF}.
\end{main}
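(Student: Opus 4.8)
\emph{Sketch of proof.}
First I would recast the problem. Giving a homeomorphism $h\colon(\R^k,0)\to(\R^k,0)$ which restricts to a diffeomorphism of $\R^k\setminus\{0\}$ and equals the identity near $0$ (extended by the identity outside a ball) amounts to giving a smooth ``pencil'' of arcs $C_\theta:=h(L_\theta)$, $\theta\in\BS^{k-1}$, each a smooth arc emanating from $0$, pairwise meeting only at $0$, filling a punctured neighbourhood of $0$, and coinciding with the linear rays near $0$. For such an $h$ the fibres of $f_h/\|f_h\|$ over $\theta$ are exactly $f^{-1}(C_\theta)\cap\BS^{n-1}_\varepsilon$, and $f_h/\|f_h\|$ is a submersion on $\BS^{n-1}_\varepsilon\setminus f^{-1}(0)$ if and only if each $f^{-1}(C_\theta)$ is transverse to $\BS^{n-1}_\varepsilon$ there. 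Since $h$ is the identity near $0$, the map $f_h=h^{-1}\circ f$ is again locally surjective with an isolated critical value and satisfies the transversality property, and its Milnor--L\^e fibration \eqref{eq:RMLF} literally coincides with that of $f$. Hence, by \cite[Proposition~3.2(4)]{Cisneros-Seade-Snoussi:d-regular} together with \cite{Cisneros-Seade-Snoussi:d-regular,Cisneros-Menegon:EMFMLF,Cisneros-Menegon:EEMMLF}, the whole theorem reduces to producing \emph{one} pencil $\{C_\theta\}$ as above with all $f^{-1}(C_\theta)$ transverse to $\BS^{n-1}_\varepsilon$ off $f^{-1}(0)$: $d$-regularity of $f_h$, the bundle structure of \eqref{eq:hSMF}, and its equivalence with \eqref{eq:RMLF} and \eqref{eq:RMF} then follow formally.

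To build the pencil I would look for a smooth vector field $\bar w$ on a punctured neighbourhood of $0$ in $\R^k$ which points strictly outward, $\langle\bar w(y),y\rangle>0$, and is radial near $0$ -- its flow lines then form a pencil of the required kind -- together with a lift $w$ of $\bar w$ to $\B^n_\varepsilon\setminus f^{-1}(0)$, i.e.\ $df_x(w(x))=\bar w(f(x))$, that is transverse to $\BS^{n-1}_\varepsilon$. Any lift is automatically tangent to each $f^{-1}(C_\theta)$, because $df_x(w(x))$ is tangent to $C_\theta$ at $f(x)$; so transversality of $w$ to $\BS^{n-1}_\varepsilon$ forces transversality of $f^{-1}(C_\theta)$ to $\BS^{n-1}_\varepsilon$. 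Since $0$ is an isolated critical value, $f$ is a submersion off $f^{-1}(0)$, so lifts exist locally, any two differ by a field tangent to the fibres of $f$, and ``transverse and outward at a point'' is an open convex condition on the lift; thus a partition of unity produces a global such $w$ as soon as the condition is solvable at every point of $\BS^{n-1}_\varepsilon\setminus f^{-1}(0)$. By the transversality property the fibre of $f$ is transverse to $\BS^{n-1}_\varepsilon$ wherever $\|f(x)\|<\delta$, and there the radial choice $\bar w(y)=y/\|y\|$ lifts transversally; so one may take $h$ to be the identity near $0$, and the only genuine work concerns the compact ``bad set'' $B_0\subset\BS^{n-1}_\varepsilon\cap\{\|f\|\ge\delta\}$ of points at which $f/\|f\|$ fails to be a submersion.

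At a point $x_0\in B_0$ one has $\ker df_{x_0}\subset T_{x_0}\BS^{n-1}_\varepsilon$ and $f(x_0)\in V_{x_0}:=df_{x_0}\bigl(T_{x_0}\BS^{n-1}_\varepsilon\bigr)$, a hyperplane of $\R^k$. A short linear-algebra computation shows that a lift of $\bar w$ is transverse and outward at $x_0$ precisely when $\bar w(f(x_0))$ lies in the open half-space $H_{x_0}=\{v:\langle v,b_{x_0}\rangle>0\}$, where $b_{x_0}$ is the unique vector with $(df_{x_0})^{\mathsf T}b_{x_0}=x_0$ (well defined exactly because $x_0\perp\ker df_{x_0}$ at a bad point); moreover $\langle b_{x_0},f(x_0)\rangle=0$, so $H_{x_0}$ always meets the outward cone $\{v:\langle v,f(x_0)\rangle>0\}$. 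One therefore has to choose, for each $y_0\in f(B_0)$, a single outward vector $\bar w(y_0)\in\bigcap_{x_0\in B_0,\ f(x_0)=y_0}H_{x_0}$; since all the $b_{x_0}$ are orthogonal to $y_0$, this intersection meets the outward cone if and only if $0\notin\operatorname{conv}\{b_{x_0}:x_0\in B_0,\ f(x_0)=y_0\}$. Interpolating such pointwise choices to a global smooth outward $\bar w$ that is radial near $0$ is then routine, again by convexity and a partition of unity.

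The main obstacle is exactly this last convexity condition: over a point $y_0$ with several preimages in $B_0$ one must rule out that the ``tangency normals'' $b_{x_0}$ wrap around the origin. This is where real-analyticity has to be used in an essential way. I would choose $\varepsilon$ generic and exploit that $B_0$ and $f(B_0)$ are semianalytic -- via the curve selection lemma, {\L}ojasiewicz inequalities and stratification -- to control the fibres of $f|_{B_0}$, aiming to reduce to the case in which each $y_0$ has only finitely many, coherently oriented, preimages, so that $0\notin\operatorname{conv}\{b_{x_0}\}$ holds; an alternative route, which I would develop in parallel, is to show that the vector field furnished by Milnor's inflation argument -- transverse to the spheres and to the Milnor tubes, as in \cite[Lemma~11.3]{Milnor:SPCH} and \cite{Aguilar-Cisneros:GVMFT} -- can itself be chosen $f$-related to a field on the target, the required consistency being inherited from the Milnor--L\^e fibration on the tube. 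Once $\bar w$, and hence $h$, is in hand, the conclusion follows at once from the characterisations of $d$-regularity recalled above.
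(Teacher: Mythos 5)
Your reduction of the Theorem to producing a pencil $\{C_\theta\}$ with every $f^{-1}(C_\theta)$ transverse to the spheres, and your pointwise linear algebra at a tangency point $x_0$ (the vector $b_{x_0}$ with $(df_{x_0})^{\mathsf T}b_{x_0}=x_0$, the half-space criterion, and the identity $\langle b_{x_0},f(x_0)\rangle=0$) are correct and consistent with the paper. But there is a genuine gap exactly where you locate ``the main obstacle'': you never prove that over a target point $y_0$ with several tangency preimages one can choose a single vector $\bar w(y_0)$ avoiding all the hyperplanes $b_{x_0}^{\perp}$ --- let alone lying in the intersection of the open half-spaces, i.e.\ $0\notin\operatorname{conv}\{b_{x_0}\}$, which your partition-of-unity construction of a globally $f$-related lift additionally demands. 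That simultaneous condition is the entire content of the theorem, and the two routes you sketch are speculative: nothing forces the $b_{x_0}$ over a fixed $y_0$ to be ``coherently oriented'', and Milnor's inflation field is built by local patching in the source with no compatibility across the fibres of $f$, so there is no reason it should be $f$-related to a field on the target. A secondary gap: you impose transversality only at $\BS^{n-1}_\e$, whereas the equivalence with the tube fibration requires inflating the tube through all intermediate spheres, hence transversality of the $f^{-1}(C_\theta)$ to every $\BS^{n-1}_{\e'}$, $0<\e'\le\e$, throughout $\B_\e^n\setminus\mathring{T}_{\e,\delta}(f)$.

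The paper resolves the crux by a mechanism your setup forecloses. It does not seek an arbitrary target field with a global $f$-related lift; it restricts to the explicit family $\vec v_\alpha(y)=y+\Vert y\Vert^2\alpha$ whose flows give the conic homeomorphisms $h_\alpha$, and it uses only \emph{local} liftings: $\alpha$ is ``suitable'' if every point $p$ of the compact set $\B_\e^n\setminus\mathring{T}_{\e,\delta}(f)$ has a neighbourhood on which \emph{some} lifting of $\vec v_\alpha$ (canonical lifting plus a locally chosen $f$-vertical correction) is non-tangent to the spheres. Since transversality of $E_\theta=f^{-1}(\CC_\theta)$ to a sphere at $p$ is a pointwise condition checkable with any single lifting at $p$, no global lift --- and hence no convex-position condition over the fibres of $f$ --- ever enters; a single $\alpha$ working everywhere is then extracted from openness of the condition in $(p,\alpha)$ together with compactness of $\B_\e^n\setminus\mathring{T}_{\e,\delta}(f)$ (Lemma~\ref{lem:balls} and Proposition~\ref{prop_1}). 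Note also that $h_\alpha$ is not the identity near $0$, so the equivalence with the Milnor--L\^e fibration is not a literal coincidence of tubes but is deduced from a homotopy-invariance lemma for the fibration over the punctured ball. If you wish to salvage your route, the statement you must actually prove is the simultaneous-avoidance condition above, and the paper's parameter family plus source-compactness argument is one way to do it.
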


In the terminology of \cite{Cisneros-etal:FTDRDMGNICV-pp}, Theorem~\ref{principal} means that every analytic map-germ as above is {\it $d_h$-regular} for a suitable {\it conic homeomorphism $h$}. In particular, it provides a positive answer to \cite[Question~3.10]{Cisneros-etal:FTDRDMGNICV-pp}.

\medskip
\section{The $d_h$-regularity for analytic maps with isolated critical value}  
\label{section_3}

In this section, we recall some definitions and results from \cite{Cisneros-etal:FTDRDMGNICV-pp} that extend the concept of $d$-regularity, allowing some maps to become $d$-regular after a homeomorphism on the target space. Then we apply those results to real analytic maps with an isolated critical value.

Given $\eta>0$, for each point $\theta \in \BS_\eta^{k-1}$ the set $\LL_\theta \subset \R^k$ is the open segment of line that starts in the origin and ends at the point $\theta$ (but not containing these two points).

We say that a homeomorphism
\[
 h\colon (\R^k,0) \longrightarrow (\R^k,0)
\]
is a \textit{conic homeomorphism} if there exists $\eta_0>0$ such that for every $\eta$ with $0<\eta<\eta_0$ the restriction
\[
 h\colon \B_\eta^k \to h(\B_\eta^k) \, ,
\]
satisfies the following:
\begin{itemize}
\item[$(i)$] For each $\theta \in \BS_\eta^{k-1}$ the image $h(\LL_\theta)$ is a smooth path in $\R^k$;
\item[$(ii)$] The inverse map $h^{-1}$ of $h$ is smooth outside the origin;
\item[$(iii)$] The map $h^{-1}$ is a submersion outside the origin.
\end{itemize}
To simplify the notation, we set $\mathcal{B}_\eta^k := h(\B_\eta^k)$, $\mathcal{S}_\eta^k := h(\BS_\eta^k)$,
\begin{equation*}
 \CC_\theta := h(\LL_\theta),\quad\text{and}\quad  E_{\theta} := f^{-1}(\CC_\theta)
\end{equation*}
for each $\theta \in \BS_\eta^{k-1}$.

Now let $f\colon (\R^n,0) \to (\R^k,0)$, with $2 \leq k \leq n$, be a locally surjective analytic map-germ with isolated critical value and the transversality property.
By \cite[Theorem~5.1, Remark~5.7]{Cisneros-Seade-Snoussi:d-regular} there exist $0<\delta\ll\varepsilon$ such that there is a Milnor-Lê fibration on a Milnor tube
\begin{equation}\label{eq:RMLF2}
f\colon T_{\varepsilon,\delta}(f)\to\BS_\delta^{k-1}.
\end{equation}
Fix the Milnor tube $T_{\varepsilon,\delta}(f)$ and denote by $\mathring{T}_{\varepsilon,\delta}(f):=\B_\varepsilon^n\cap f^{-1}(\mathring{\B}_\delta^k) $ the \textit{open solid Milnor tube}.
Let $h\colon \B_\eta^k \to\mathcal{B}_\eta^k$ be a conic homeomorphism and set $f_h:=h^{-1}\circ f$.
We say that $f$ is \textit{$d_h$-regular} (outside $\mathring{T}_{\varepsilon,\delta}(f)$) if $\varepsilon >0$ is small enough (with $f(\B_{\varepsilon}^n) \subset \mathring{\mathcal{B}}_{\eta_0}^k$) such that for every $\varepsilon'$ with $0<\varepsilon' \leq \varepsilon$ the sphere $\BS_{\varepsilon'}^{n-1}$ intersects $E_{\theta}$ transversely (whenever such intersection is not empty) for each $\theta\in\BS_\eta^{k-1}$ at every point $p\in\B_\varepsilon^n\setminus\mathring{T}_{\varepsilon,\delta}(f)$.

In this case, by \cite[Theorem~3.12]{Cisneros-etal:FTDRDMGNICV-pp} the map
\begin{equation}\label{eq:fh-fib}
\frac{f_h}{\|f_h\|}\colon \BS_\varepsilon^{n-1} \setminus f^{-1}(0)  \to \BS^{k-1}
\end{equation}
is the projection of a smooth locally trivial fibration. 

\begin{remark}
Note that the previous definition of $d_h$-regularity is a bit weaker than the definition given in \cite[\S 3.2]{Cisneros-etal:FTDRDMGNICV-pp} since we only ask the transversality of $E_\theta$ with the spheres \textit{outside} of the open solid Milnor tube $\mathring{T}_{\varepsilon,\delta}(f)$, but this is enough to prove the equivalence of the Milnor-Lê fibration \eqref{eq:RMLF2} and the normalized Milnor fibration \eqref{eq:fh-fib} \cite[Theorem~2.16]{Cisneros-etal:FTDRDMGNICV-pp}, since we only need the vector field defined outside the open solid Milnor tube, to ``inflate'' the Milnor tube to the sphere. In fact, we can take a slightly smaller open solid Milnor tube so that the boundary points of the Milnor tube $T_{\varepsilon,\delta}(f)$ will not cause any trouble to define the vector field.
\end{remark}

Let us show now that fibration \eqref{eq:fh-fib} is equivalent to fibration \eqref{eq:RMLF} and also equivalent to fibration \eqref{eq:RMF} above.
In fact, it follows from (\cite[Theorem~2.16]{Cisneros-etal:FTDRDMGNICV-pp}) that \eqref{eq:fh-fib} is equivalent to the fibration
\begin{equation*}
f_h\colon \B_\varepsilon^n \cap f_h^{-1}(\BS_{\delta'}^{k-1}) \to \BS_{\delta'}^{k-1} \, . \end{equation*}
But this last one is clearly equivalent to the fibration
\begin{equation} \label{eq:fib}
 f\colon \B_\varepsilon^n \cap f^{-1}(\mathcal{S}_{\delta'}^{k-1}) \to \mathcal{S}_{\delta'}^{k-1}  \, .
\end{equation}
So we just have to show that fibration (\ref{eq:fib}) is equivalent to the fibration 
\begin{equation} \label{eq:fib2}
 f\colon \B_\varepsilon^n \cap f^{-1}(\BS_{\delta''}^{k-1}) \to \BS_{\delta''}^{k-1}
\end{equation}
for some sphere sufficiently small such that $\BS_{\delta''}^{k-1}\subset\B_\delta^k$. But this follows from the next lemma using the Milnor-Lê fibration on a solid Milnor tube
\cite[Proposition~5.1]{Cisneros-Seade-Snoussi:d-regular}:

\begin{lemma}
Consider the fibration on the solid tube induced by $f$
\begin{equation}
 f|\colon \B_\varepsilon^n \cap f^{-1}(\mathring{\B}_\delta^k\setminus\{0\}) \to (\mathring{\B}_\delta^k\setminus\{0\}) \, , \label{eq:SMT}
\end{equation}
and notice that $\pi_{k-1}(\mathring{\B}_\delta^k\setminus\{0\})\cong\Z$. Let $\tilde{g},\tilde{h}\colon \BS^{k-1}\to \mathring{\B}_\delta^k\setminus\{0\}$ be two continuous embeddings such that both 
represent the generator of  $\pi_{k-1}(\mathring{\B}_\delta^k\setminus\{0\})$. Then the restrictions
\begin{align}
f|\colon \B_\varepsilon^n \cap f^{-1}\bigl(g(\BS^{k-1})\bigr) &\to g(\BS^{k-1}) \label{eq:fib.g}\\
\intertext{and}
f|\colon \B_\varepsilon^n \cap f^{-1}\bigl(h(\BS^{k-1})\bigr) &\to h(\BS^{k-1})\label{eq:fib.h}
\end{align}
are equivalent fibrations.
\end{lemma}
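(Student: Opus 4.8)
The plan is to recognize both restrictions \eqref{eq:fib.g} and \eqref{eq:fib.h} as pullbacks of the single locally trivial fibration \eqref{eq:SMT}, and then to appeal to the behaviour of locally trivial fibrations under pullback along homotopic maps. First I would collect the two facts already at hand: by \cite[Proposition~5.1]{Cisneros-Seade-Snoussi:d-regular} the map \eqref{eq:SMT} is a locally trivial fibration, and its base $\mathring{\B}_\delta^k\setminus\{0\}$ is paracompact and deformation retracts onto the round sphere $\BS_\rho^{k-1}$ for any $0<\rho<\delta$; in particular it is homotopy equivalent to $\BS^{k-1}$ and $\pi_{k-1}(\mathring{\B}_\delta^k\setminus\{0\})\cong\pi_{k-1}(\BS^{k-1})\cong\Z$, as asserted in the statement.

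Write $p:=f|\colon E\to\mathring{\B}_\delta^k\setminus\{0\}$ for the fibration \eqref{eq:SMT}, with $E=\B_\varepsilon^n\cap f^{-1}(\mathring{\B}_\delta^k\setminus\{0\})$. Since $\tilde g$ is a topological embedding, the projection $(x,e)\mapsto e$ is a homeomorphism from the total space of the pullback $\tilde g^{\,*}E$ onto $\B_\varepsilon^n\cap f^{-1}\bigl(\tilde g(\BS^{k-1})\bigr)$, with inverse $e\mapsto\bigl(\tilde g^{-1}(f(e)),e\bigr)$, and under this homeomorphism the bundle projection $\tilde g^{\,*}E\to\BS^{k-1}$ corresponds to \eqref{eq:fib.g} via the homeomorphism $\tilde g\colon\BS^{k-1}\to\tilde g(\BS^{k-1})$; likewise for $\tilde h$. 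So it suffices to show that $\tilde g^{\,*}E$ and $\tilde h^{\,*}E$ are isomorphic fibre bundles over $\BS^{k-1}$.

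This I would obtain by showing that $\tilde g$ and $\tilde h$ are homotopic as maps $\BS^{k-1}\to\mathring{\B}_\delta^k\setminus\{0\}$. Post-composing with the homotopy equivalence onto $\BS^{k-1}$, this reduces to the fact that two maps $\BS^{k-1}\to\BS^{k-1}$ both representing the generator of $\pi_{k-1}$ are freely homotopic: for $k\ge3$ the target is simply connected, so free homotopy classes coincide with based ones and are detected by degree (Hopf's theorem); for $k=2$ the free homotopy classes of loops in $\BS^1$ coincide with $\pi_1(\BS^1)$ because the group is abelian. Hence $\tilde g\simeq\tilde h$, and since $p$ is locally trivial over the paracompact (indeed compact) space $\BS^{k-1}$, the covering homotopy theorem yields a bundle isomorphism $\tilde g^{\,*}E\cong\tilde h^{\,*}E$ over $\BS^{k-1}$. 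Composing it with the two identifications of the previous paragraph produces a homeomorphism $\B_\varepsilon^n\cap f^{-1}\bigl(\tilde g(\BS^{k-1})\bigr)\to\B_\varepsilon^n\cap f^{-1}\bigl(\tilde h(\BS^{k-1})\bigr)$ covering the homeomorphism $\tilde h\circ\tilde g^{-1}$ of the bases, i.e.\ an equivalence of the fibrations \eqref{eq:fib.g} and \eqref{eq:fib.h}.

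The only step needing genuine care is the reduction to homotopic maps: one must know that ``representing the generator'' really pins down the free homotopy class of each embedding, which holds because $\pi_1(\mathring{\B}_\delta^k\setminus\{0\})$ — trivial for $k\ge3$, infinite cyclic hence abelian for $k=2$ — acts trivially on $\pi_{k-1}$; once this is in place the two embeddings are honestly homotopic and the rest is the standard naturality of locally trivial fibrations under pullback. A minor bookkeeping remark is that, as $\tilde g(\BS^{k-1})$ need only be a topologically embedded sphere, ``equivalent'' is meant here in the topological category, which is precisely the output of the covering homotopy theorem.
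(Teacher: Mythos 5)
Your proposal is correct and follows essentially the same route as the paper: both identify the two restrictions as pullbacks of the locally trivial fibration \eqref{eq:SMT} along $\tilde g$ and $\tilde h$, and both invoke homotopy invariance of pullbacks of fibre bundles over a paracompact base. If anything, you are more careful than the paper on the one delicate point — that ``representing the generator of $\pi_{k-1}$'' determines the \emph{free} homotopy class (trivial $\pi_1$-action for $k\ge 3$, abelian $\pi_1$ for $k=2$) — which the paper's proof simply asserts by saying the two maps are homotopic.
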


\begin{proof}
For paracompact spaces locally trivial fibrations are homotopy invariant, that is, pull-backs over homotopic maps give equivalent fibre bundles (see for instance \cite[\S4 
Theorem~9.8]{Husemoller:Bundles} plus a reduction to the principal bundle case). Since $\tilde{g}$ and $\tilde{h}$ are homotopic, the pull-backs of the fibre bundle \eqref{eq:SMT} by $\tilde{g}$ and 
$\tilde{h}$ are equivalent. Since $\tilde{g}$ and $\tilde{h}$ are embeddings those pull-backs are equivalent to the restrictions \eqref{eq:fib.g} and \eqref{eq:fib.h}.
\end{proof}

Thus we have proved the following:

\begin{prop} \label{prop_principal}
Let $f\colon (\R^n,0) \to (\R^k,0)$ be a locally surjective analytic map with an isolated critical value satisfying the transversality property.
If $h\colon (\R^k,0) \to (\R^k,0)$ is a conic homeomorphism such that $f$ is $d_h$-regular, then the map
\begin{equation}\label{SF}
\frac{h^{-1} \circ f}{\|h^{-1} \circ f\|}\colon\BS_\varepsilon^{n-1} \setminus f^{-1}(0)  \to \BS^{k-1}
\end{equation}
is the projection of a smooth locally trivial fibration. Moreover, this fibration is equivalent to fibration \eqref{eq:RMLF} and to fibration \eqref{eq:RMF}.
\end{prop}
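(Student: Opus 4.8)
The plan is to assemble the equivalences collected in the discussion above into a single chain, the point being that the hypotheses on $h$ enter only once. First I would observe that, since $h$ is a conic homeomorphism and $f$ is $d_h$-regular, \cite[Theorem~3.12]{Cisneros-etal:FTDRDMGNICV-pp} applies directly and gives that the normalized map \eqref{SF} is the projection of a smooth locally trivial fibration. Everything after this step is a matter of identifying that fibration, up to equivalence, with the Milnor--L\^e fibration \eqref{eq:RMLF} and the Milnor fibration \eqref{eq:RMF}.

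Next I would pass from the sphere $\BS_\varepsilon^{n-1}$ to a tube. By \cite[Theorem~2.16]{Cisneros-etal:FTDRDMGNICV-pp}, the vector field furnished by $d_h$-regularity inflates the solid Milnor tube to the sphere and yields an equivalence between \eqref{SF} and the fibration $f_h\colon \B_\varepsilon^n\cap f_h^{-1}(\BS_{\delta'}^{k-1})\to\BS_{\delta'}^{k-1}$ for $0<\delta'\ll\varepsilon$. Since $f_h=h^{-1}\circ f$, one has $f_h^{-1}(\BS_{\delta'}^{k-1})=f^{-1}\bigl(h(\BS_{\delta'}^{k-1})\bigr)=f^{-1}(\mathcal{S}_{\delta'}^{k-1})$, and post-composing the base with $h$ turns this tube fibration into \eqref{eq:fib}, the fibration of $f$ over the topologically embedded sphere $\mathcal{S}_{\delta'}^{k-1}\subset\mathring{\B}_\delta^k\setminus\{0\}$.

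The remaining task is to replace $\mathcal{S}_{\delta'}^{k-1}$ by a genuine round sphere $\BS_{\delta''}^{k-1}$ of small radius, i.e. to identify \eqref{eq:fib} with \eqref{eq:fib2}. For this I would invoke the Milnor--L\^e fibration over the whole punctured solid ball, namely \eqref{eq:SMT}, which exists by \cite[Proposition~5.1]{Cisneros-Seade-Snoussi:d-regular}; since $\pi_{k-1}(\mathring{\B}_\delta^k\setminus\{0\})\cong\Z$ and both $\mathcal{S}_{\delta'}^{k-1}=h(\BS_{\delta'}^{k-1})$ and $\BS_{\delta''}^{k-1}$ are embedded $(k-1)$-spheres bounding a contractible neighborhood of the origin inside $\mathring{\B}_\delta^k$, each represents a generator, so the Lemma above gives the equivalence of \eqref{eq:fib} with \eqref{eq:fib2}. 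Finally \eqref{eq:fib2} is precisely the Milnor--L\^e fibration \eqref{eq:RMLF} with the admissible radius $\delta''$ in place of $\delta$, and \eqref{eq:RMLF} is equivalent to the Milnor fibration \eqref{eq:RMF} by the classical inflation of the tube to the sphere along Milnor's transverse vector field (\cite[Lemma~11.3]{Milnor:SPCH} and \cite[\S5]{Cisneros-Seade-Snoussi:d-regular}). Concatenating these equivalences proves the proposition.

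I expect the only genuinely delicate point to be the bookkeeping of the various radii: one must first fix $\varepsilon$ small enough that $f(\B_\varepsilon^n)\subset\mathring{\mathcal{B}}_{\eta_0}^k$ and that $d_h$-regularity holds, then a Milnor--L\^e radius $\delta$, and finally $\delta'$ and $\delta''$ small enough that $\mathcal{S}_{\delta'}^{k-1}$ and $\BS_{\delta''}^{k-1}$ both lie in $\mathring{\B}_\delta^k\setminus\{0\}$, so that all of \eqref{eq:fib}, \eqref{eq:fib2} and the solid-tube fibration \eqref{eq:SMT} are simultaneously defined. Once the radii are chosen consistently, the homotopy-invariance step and Milnor's inflation step are both formal.
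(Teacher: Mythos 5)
Your proposal is correct and follows essentially the same route as the paper: first \cite[Theorem~3.12]{Cisneros-etal:FTDRDMGNICV-pp} for the existence of the fibration, then \cite[Theorem~2.16]{Cisneros-etal:FTDRDMGNICV-pp} to pass to the tube over $\mathcal{S}_{\delta'}^{k-1}$, then the homotopy-invariance Lemma applied to the fibration over the punctured solid ball to replace $\mathcal{S}_{\delta'}^{k-1}$ by a round sphere $\BS_{\delta''}^{k-1}$, and finally the standard equivalence of the Milnor--L\^e fibration with the Milnor fibration on the sphere. The remarks on choosing the radii consistently are sensible and consistent with the paper's setup.
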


As an immediate consequence, we have:

\begin{cor} \label{cor_7}
The locally trivial fibration \eqref{SF} of Proposition~\ref{prop_principal} above does not depend on the choice of the conic homeomorphism $h$ (as long as $f$ is $d_h$-regular), up to topological equivalence of fiber
bundles.
\end{cor}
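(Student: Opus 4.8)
The plan is to obtain this as a formal consequence of Proposition~\ref{prop_principal}, using the crucial observation that the two reference fibrations appearing there — the Milnor--L\^e fibration \eqref{eq:RMLF} on the tube and the Milnor fibration \eqref{eq:RMF} on the sphere — depend only on $f$ and carry no dependence whatsoever on the auxiliary conic homeomorphism. So the strategy is simply to realise each normalized Milnor fibration \eqref{SF} as equivalent to one and the same $h$-independent object, and then invoke transitivity of the equivalence relation on fiber bundles.

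Concretely, I would fix two conic homeomorphisms $h_1,h_2\colon(\R^k,0)\to(\R^k,0)$ such that $f$ is both $d_{h_1}$-regular and $d_{h_2}$-regular. A priori the radii $\varepsilon,\delta$ witnessing $d_{h_i}$-regularity could differ with $i$, so the first small step is to pass to a common pair: since the definition of $d_h$-regularity is already closed under shrinking $\varepsilon$ (it quantifies over all $\varepsilon'\le\varepsilon$) and requires only $0<\delta\ll\varepsilon$, one may choose a single $\varepsilon$ and a single $\delta$ that work simultaneously for $h_1$ and $h_2$. With this common choice, Proposition~\ref{prop_principal} applies verbatim to each $h_i$ and produces a topological equivalence of fiber bundles between $\dfrac{h_i^{-1}\circ f}{\|h_i^{-1}\circ f\|}$ on $\BS^{n-1}_\varepsilon\setminus f^{-1}(0)$ and the Milnor--L\^e fibration \eqref{eq:RMLF}, for $i=1,2$.

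It then remains to compose. Because the Milnor--L\^e fibration \eqref{eq:RMLF} is intrinsic to $f$, and its equivalence class does not depend on the admissible pair of radii (by the standard uniqueness statement for the Milnor tube fibration, which underlies Proposition~\ref{prop_principal} itself), the fibration \eqref{SF} built from $h_1$ is equivalent to \eqref{eq:RMLF}, which is equivalent to the fibration \eqref{SF} built from $h_2$; by transitivity and symmetry of fiber-bundle equivalence these two normalized fibrations are topologically equivalent. The identical argument with \eqref{eq:RMF} in place of \eqref{eq:RMLF} gives an alternative route to the same conclusion.

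I do not expect a genuine obstacle here: the statement is a corollary in the strict sense, reducing entirely to Proposition~\ref{prop_principal}. The only point deserving a careful line is the reconciliation of the (possibly distinct) radii $\varepsilon,\delta$ associated with the two homeomorphisms, together with the remark that \eqref{eq:RMLF} is well defined up to equivalence independently of such a choice — both of which are routine.
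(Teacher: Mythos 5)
Your proof is correct and is essentially the paper's argument: the corollary is presented there as an immediate consequence of Proposition~\ref{prop_principal}, precisely because every normalized fibration \eqref{SF} is equivalent to the single $h$-independent Milnor--L\^e fibration \eqref{eq:RMLF}, so any two are equivalent by transitivity. Your extra remark on reconciling the radii $\varepsilon,\delta$ is a reasonable (routine) refinement that the paper leaves implicit.
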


This means that we are not going to find different fibrations on the sphere by looking for different conic homeomorphisms $h$ such that $f$ is $d_h$-regular.

We finish this section with an example:

\begin{example} \label{ex_1}
Consider the map $f: (\R^3,0) \to (\R^2,0)$ given by 
$$f(x,y,z)=(x^2z+y^3-z,x) \, .$$
It has an isolated critical point, so it has the transversality property. Using Proposition 3.8 of \cite{Cisneros-Seade-Snoussi:RSCG}, one can check that $f$ is not $d$-regular. In fact, the matrix $M$ (as defined in that Proposition) has rank less than $2$ at every point of the form $(x,0,0)$. Hence the map $\BS_\varepsilon^{n-1} \backslash f^{-1}(0) \to \BS^1$ given by $\phi(x) := \frac{f}{\|f\|}$ is not the projection of a fibration equivalent to the Milnor-L\^e fibration on the tube.

\noindent Given $\eta>0$, consider the conic homeomorphism $h: \B_\eta^2 \to \B_\eta^2$ given by
$$
h(y_1,y_2) =
\begin{cases}
\eta
\begin{bmatrix}
\frac{1}{\sqrt{1+(y_2/y_1)^2}} \exp \left( 1- \frac{\eta}{\sqrt{y_1^2+y_2^2}} \right) \\ 
\frac{y_2/y_1}{\sqrt{1+(y_2/y_1)^2}} \exp \left( \frac{1}{2}- \frac{\eta}{2\sqrt{y_1^2+y_2^2}} \right) \\
\end{bmatrix}
& \text{if} \ y_1 \neq 0 \\
\\
\eta
\begin{bmatrix}
\frac{y_1/y_2}{\sqrt{1+(y_1/y_2)^2}}  \exp \left( 1- \frac{\eta}{\sqrt{y_1^2+y_2^2}} \right) \\ 
\frac{1}{\sqrt{1+(y_1/y_2)^2}} \exp \left( \frac{1}{2}- \frac{\eta}{2\sqrt{y_1^2+y_2^2}} \right) \\
\end{bmatrix}
& \text{if} \ y_2 \neq 0 \\
\end{cases}
$$
Its inverse $h^{-1}: \B_\eta^2 \to \B_\eta^2$ is given by
$$h^{-1}(y_1,y_2) = \xi(y_1,y_2) \cdot \left( 2\eta y_1,  y_2 \sqrt{2y_2^2 +2 \sqrt{y_2^4+4y_1^2\eta^2}}  \right) $$
where 
$$\xi(y_1,y_2):= \frac{\eta}{\left[1-\ln(\frac{y_2^2+ \sqrt{y_2^4 + 4y_1^2 \eta^2}}{2 \eta^2})\right] \left[ y_2^2+\sqrt{y_2^4 + 4y_1^2 \eta^2} \right]} \, .$$
\end{example}

\section{Conic vector fields}

The goal of this section is to construct a family of conic homeomorphisms of the target space via flows of suitable vector fields.

We start defining some vector fields $\vec{v}_\alpha\colon\R^k\to\R^k$ on $\R^k$ parametrized by vectors $\alpha\in\R^k$.
The vector fields $\vec{v}_\alpha$ are perturbations of the radial vector field in $\R^k$ and we shall see that they give rise to conic homeomorphisms.

\begin{remark}
    To distinguish the space of parameters $\R^k$ from the space $\R^k$ where the vector field $\vec{v}_\alpha$ are defined, we underline the parameter space $\Rp^k$ or
any ball $\underline{\mathring{\B}}_r^k\subset\Rp^k$ in it.
\end{remark}

\begin{defi}
For each point $\alpha\in \Rp^k$ define the vector field in $\R^k$ given by
\begin{equation}\label{eq:source.vf}
\vec{v}_\alpha(y)=y+\Vert y\Vert^2\alpha.
\end{equation}
We call $\vec{v}_\alpha$ the \textit{conic vector field} given by $\alpha$.
\end{defi}

\begin{remark}
Notice that
\begin{enumerate}
 \item For every $\alpha\in\Rp^k$ we have that $v_\alpha(0)=0$.
 \item The vector field $\vec v_0$ is just the radial vector field.
 \item We can see the vector field $v_\alpha$ as a perturbation of the radial vector field $\vec v_0$.
\end{enumerate}
\end{remark}

\begin{prop}\label{prop:other.zero}
Let $\alpha\in\Rp^k$. The vector field $\vec{v}_\alpha$ has exactly two zeros, one at the origin and the other one at the point $y_\alpha=-\frac{\alpha}{\Vert\alpha\Vert^2}$.
\end{prop}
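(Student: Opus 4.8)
The plan is to solve the zero equation $\vec v_\alpha(y)=0$ by hand. Unwinding the definition \eqref{eq:source.vf}, a zero is a point $y\in\R^k$ with $y=-\Vert y\Vert^2\alpha$. The origin is visibly such a point, so the only thing to do is classify the \emph{nonzero} zeros; implicitly one assumes $\alpha\neq 0$ here (for $\alpha=0$ the field $\vec v_0$ is radial and $0$ is its unique zero, and the statement should be read with that proviso, the ``second'' zero having escaped to infinity).

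So suppose $y\neq 0$ and $y=-\Vert y\Vert^2\alpha$. The first observation is that $y$ is then automatically a scalar multiple of $\alpha$, since the right-hand side is. Taking norms of both sides gives $\Vert y\Vert=\Vert y\Vert^2\,\Vert\alpha\Vert$, and dividing by $\Vert y\Vert\neq 0$ forces $\Vert y\Vert=1/\Vert\alpha\Vert$. Feeding this back into $y=-\Vert y\Vert^2\alpha$ yields $y=-\alpha/\Vert\alpha\Vert^2=y_\alpha$. (Equivalently, one may substitute the ansatz $y=t\alpha$ and solve the scalar equation $t=-t^2\Vert\alpha\Vert^2$ for $t\neq 0$, getting $t=-1/\Vert\alpha\Vert^2$; either route is immediate.) Conversely $y_\alpha$ is indeed a zero, since $\Vert y_\alpha\Vert^2=1/\Vert\alpha\Vert^2$ gives $y_\alpha+\Vert y_\alpha\Vert^2\alpha=-\alpha/\Vert\alpha\Vert^2+\alpha/\Vert\alpha\Vert^2=0$, and $y_\alpha\neq 0$ because $\alpha\neq 0$, so the two zeros are genuinely distinct.

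There is no real obstacle in this argument: it is a one-line computation once the right reduction (first to a multiple of $\alpha$, then to norms) is made. The only subtlety worth flagging explicitly is the boundary case $\alpha=0$, which I would dispatch at the very start so that the rest of the proof may safely divide by $\Vert\alpha\Vert$ and by $\Vert y\Vert$.
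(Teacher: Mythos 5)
Your proof is correct and follows essentially the same route as the paper's: take norms of $y=-\Vert y\Vert^2\alpha$ to get $\Vert y\Vert=1/\Vert\alpha\Vert$ and substitute back. Your explicit handling of the degenerate case $\alpha=0$ (where $y_\alpha$ is undefined and the radial field has only one zero) and the converse verification that $y_\alpha$ is indeed a zero are small but worthwhile additions that the paper leaves implicit.
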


\begin{proof}
By the definition of $\vec{v}_\alpha$ given in \eqref{eq:source.vf} it is clear that the origin is a zero. Now suppose $y\neq0$ is a zero of $\vec{v}_\alpha$, again by \eqref{eq:source.vf} we
have that
\begin{equation}\label{eq:alpha.y}
\alpha=-\frac{y}{\Vert y\Vert^2}.
\end{equation}
Hence taking the norm of \ref{eq:alpha.y} we have that
\begin{equation}\label{eq:norm.alpha}
\Vert y\Vert=\frac{1}{\Vert\alpha\Vert}.
\end{equation}
Also by \eqref{eq:alpha.y} we have that $y=-\alpha\Vert y\Vert^2$ and substituting \eqref{eq:norm.alpha} we get that $y=-\frac{\alpha}{\Vert\alpha\Vert^2}$ is a zero of $\vec{v}_\alpha$.
\end{proof}

\begin{defi}
Let $\alpha\in\Rp^n$, the zero $y_\alpha$ of the conic vector fields $\vec{v}_\alpha$ given by $\alpha$ given in Proposition~\ref{prop:other.zero} is called the \textit{non-trivial zero}.
\end{defi}

Figure~\ref{fig:singularity} shows the zeros of a conic vector field.

\begin{figure}[!h]
\centering
\includegraphics[scale=0.5]{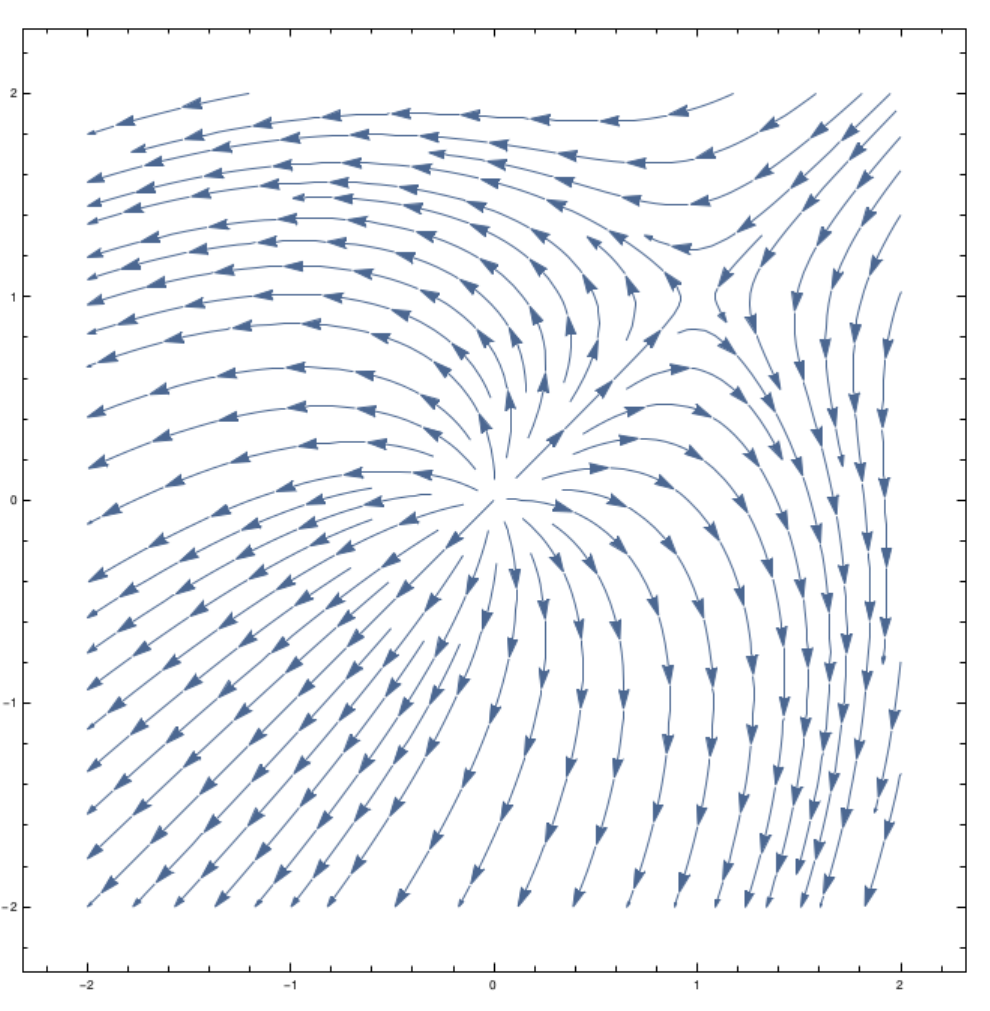}
\caption{The zeros of the  conic vector fields $\vec v_\alpha$ in $\R^2$ given by $\alpha=(-\frac{1}{2},-\frac{1}{2})$.}
\label{fig:singularity}
\end{figure}

\begin{cor}\label{cor:alpha.small}
Let $\alpha\in\Rp^k$ such that $\Vert\alpha\Vert<1$. Then in the open unit ball $\mathring{\B}_1^k$, the origin is the only zero of the conic vector fields $\vec{v}_\alpha$.
\end{cor}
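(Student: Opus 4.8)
The plan is to deduce this directly from Proposition~\ref{prop:other.zero}, which already gives a complete list of the zeros of $\vec{v}_\alpha$. First I would dispose of the degenerate case $\alpha=0$: then $\vec{v}_0$ is the radial vector field $\vec{v}_0(y)=y$, whose only zero in all of $\R^k$ is the origin, so in particular the origin is the only zero in $\mathring{\B}_1^k$.

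For $\alpha\neq 0$, Proposition~\ref{prop:other.zero} tells us that $\vec{v}_\alpha$ has exactly two zeros: the origin and the non-trivial zero $y_\alpha=-\frac{\alpha}{\Vert\alpha\Vert^2}$. The one thing left to check is whether $y_\alpha$ lies in the open unit ball. Computing its norm,
\[
\Vert y_\alpha\Vert=\frac{\Vert\alpha\Vert}{\Vert\alpha\Vert^2}=\frac{1}{\Vert\alpha\Vert},
\]
so the hypothesis $\Vert\alpha\Vert<1$ gives $\Vert y_\alpha\Vert=\frac{1}{\Vert\alpha\Vert}>1$. Hence $y_\alpha\notin\mathring{\B}_1^k$, and the only zero of $\vec{v}_\alpha$ inside $\mathring{\B}_1^k$ is the origin, as claimed.

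There is no real obstacle here: the statement is an immediate corollary, the only (trivial) point of care being to treat $\alpha=0$ separately, since the formula for $y_\alpha$ in Proposition~\ref{prop:other.zero} presupposes $\alpha\neq0$. If one prefers a self-contained argument avoiding this case split, one can instead argue directly from \eqref{eq:source.vf}: if $0\neq y\in\mathring{\B}_1^k$ were a zero, then \eqref{eq:norm.alpha} (which is derived under the sole assumption that $y\neq 0$ is a zero) forces $\Vert\alpha\Vert=\frac{1}{\Vert y\Vert}>1$, contradicting $\Vert\alpha\Vert<1$; this simultaneously rules out $\alpha=0$ and shows there is no non-trivial zero in the ball.
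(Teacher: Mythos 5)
Your proposal is correct and follows essentially the same route as the paper: invoke Proposition~\ref{prop:other.zero}, compute $\Vert y_\alpha\Vert=1/\Vert\alpha\Vert>1$, and conclude the non-trivial zero lies outside $\mathring{\B}_1^k$. Your extra care with the case $\alpha=0$ (which the paper's proof silently skips, since the formula for $y_\alpha$ degenerates there) is a minor but welcome refinement, not a different argument.
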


\begin{proof}
By Proposition~\ref{prop:other.zero}, the non-trivial zero of $\vec{v}_\alpha$ is the point $y_\alpha=-\frac{\alpha}{\Vert\alpha\Vert^2}$ and
by \eqref{eq:norm.alpha} $\Vert y_\alpha\Vert=\frac{1}{\Vert\alpha\Vert}$, hence, if $\Vert\alpha\Vert<1$ then $\Vert y_\alpha\Vert>1$, so the only zero in the unit ball $\B_1^k\subset\R^k$ is the origin.
\end{proof}

For $\alpha \in \Rp^k$ small, $\vec v_\alpha$ is a small perturbation of the radial vector field $\vec v_0$, so near $0\in \R^k$,
the vector field $\vec v_\alpha$ is transverse to the spheres centered at $0$, as it is proved in the following proposition.

\begin{prop}\label{prop:source.tranverse}
Let $\alpha\in\Rp^k$ with $\Vert\alpha\Vert<1$. Then the conic vector fields $\vec{v}_\alpha$ given by $\alpha$ is transverse to all the spheres $\BS_\eta^{k-1}\subset\R^k$ centered at $0$ of radius $0<\eta<1$.
\end{prop}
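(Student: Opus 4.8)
The plan is to reduce the transversality claim to a single inner-product computation. Recall that $\vec{v}_\alpha$ is transverse to the sphere $\BS_\eta^{k-1}$ at a point $y$ precisely when $\vec{v}_\alpha(y)$ is not tangent to the sphere there; since the outward unit normal to $\BS_\eta^{k-1}$ at $y$ is $y/\Vert y\Vert$, this amounts to showing that $\langle \vec{v}_\alpha(y), y\rangle \neq 0$ for every $y\in\R^k$ with $\Vert y\Vert=\eta$, $0<\eta<1$.

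First I would expand this inner product using the definition \eqref{eq:source.vf}:
\[
\langle \vec{v}_\alpha(y), y\rangle = \langle y, y\rangle + \Vert y\Vert^2\langle \alpha, y\rangle = \Vert y\Vert^2\bigl(1 + \langle \alpha, y\rangle\bigr).
\]
On the sphere of radius $\eta$ the factor $\Vert y\Vert^2=\eta^2$ is strictly positive, so the sign of the whole expression is governed by $1+\langle\alpha,y\rangle$. By the Cauchy--Schwarz inequality, $\langle\alpha,y\rangle\geq -\Vert\alpha\Vert\,\Vert y\Vert=-\Vert\alpha\Vert\eta$, and since $\Vert\alpha\Vert<1$ and $0<\eta<1$ we have $\Vert\alpha\Vert\eta<1$; hence $1+\langle\alpha,y\rangle>0$. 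Therefore $\langle \vec{v}_\alpha(y), y\rangle>0$, so $\vec{v}_\alpha(y)$ has a strictly positive radial component, in particular is nonzero and not tangent to $\BS_\eta^{k-1}$, which is exactly transversality.

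There is essentially no serious obstacle here. The only points needing a word of care are the exclusion of $y=0$ (automatic, since we only consider spheres of positive radius, so the radial direction is well defined) and the invocation of Cauchy--Schwarz to control the cross term (immediate). I would also record, for later use, that the computation in fact shows $\vec{v}_\alpha$ points strictly outward along every such sphere; this positivity is precisely the property that will let its flow be defined and give rise to a conic homeomorphism in the subsequent results.
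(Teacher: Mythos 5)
Your proof is correct, and it shares the paper's first step verbatim: both reduce transversality to the non-vanishing of $\langle \vec{v}_\alpha(y),y\rangle=\Vert y\Vert^2\bigl(1+\langle\alpha,y\rangle\bigr)$, so that the issue is whether $\langle\alpha,y\rangle=-1$ can occur on a sphere of radius less than $1$. Where you diverge is in how you rule this out. The paper identifies the tangency locus $\{y:\langle\alpha,y\rangle=-1\}$ geometrically as the affine hyperplane $y_\alpha+H_\alpha$ through the non-trivial zero $y_\alpha=-\alpha/\Vert\alpha\Vert^2$ and orthogonal to $\alpha$, observes that its point closest to the origin is $y_\alpha$ itself with $\Vert y_\alpha\Vert=1/\Vert\alpha\Vert>1$ (via Proposition~\ref{prop:other.zero} and Corollary~\ref{cor:alpha.small}), and concludes the whole locus lies outside the closed unit ball. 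You instead bound the cross term directly by Cauchy--Schwarz, $\vert\langle\alpha,y\rangle\vert\le\Vert\alpha\Vert\,\Vert y\Vert<1$, which is shorter and immediately yields the strict positivity $\langle\vec{v}_\alpha(y),y\rangle>0$ on $\mathring{\B}_1^k\setminus\{0\}$ --- a fact the paper records separately as Lemma~\ref{lem:away} (proved there by essentially your argument, written with $\langle\alpha,y\rangle=\Vert\alpha\Vert\,\Vert y\Vert\cos\theta$). So your route buys economy and folds the ``points outward'' statement into the same computation, while the paper's route buys the explicit geometric picture of exactly where transversality would fail, tying it to the location of the non-trivial zero. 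Your closing remark that positivity is what later drives the construction of the conic homeomorphism is exactly right.
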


\begin{proof}
The vector $\vec{v}_\alpha(y)$ is tangent to the sphere which passes through $y$ if and only if 
$0=\langle \vec{v}_\alpha(y),y\rangle=\langle y+\Vert y\Vert^2\alpha,y\rangle=\Vert y\Vert^2\bigl(1+\langle\alpha,y\rangle\bigr)$.
Thus, if $y\neq0$, we have that $\langle \vec{v}_\alpha(y),y\rangle=0$ if and only if
\begin{equation}\label{eq:inp.a.y}
\langle\alpha,y\rangle=-1.
\end{equation}
Clearly the non-trivial zero of $\vec{v}_\alpha$ (which is a negative multiple of $\alpha$) satisfies \eqref{eq:inp.a.y}
\begin{equation*}
\langle\alpha,y_\alpha\rangle=\langle\alpha,-\frac{\alpha}{\Vert\alpha\Vert^2}\rangle=-1.
\end{equation*}
Let $H_\alpha$ be the subspace orthogonal to the line generated by $\alpha$, i.~e.,
\begin{equation*}
 H_\alpha=\{w\in\R^k\,|\,\langle\alpha,w\rangle=0\}.
\end{equation*}
Hence, any vector of the form
\begin{equation}\label{eq:tangent.vectors}
y_\alpha+w,\quad \text{with $w\in H_\alpha$,}
\end{equation}
satisfies \eqref{eq:inp.a.y} since $\langle\alpha,y_\alpha+w\rangle=\langle\alpha,y_\alpha\rangle+\langle\alpha,w\rangle=-1$.
The vector of this form with smallest norm is precisely $y_\alpha$, which by Corollary~\ref{cor:alpha.small} is outside the closed unit ball $\B_1^k$ since $\Vert\alpha\Vert<1$.
Hence, any other vector of the form \eqref{eq:tangent.vectors} is also outside the ball $\B_1^k$ and therefore the conic vector fields given by $\alpha$ is transverse to all
the spheres $\BS_\eta^{k-1}$ centered at $0$ of radius $0<\eta<1$.
\end{proof}

\begin{remark}
In view of Corollary~\ref{cor:alpha.small} and Proposition~\ref{prop:source.tranverse}, from now on, we will only consider $\alpha\in\Rp^k$ such that $\Vert\alpha\Vert<1$, i.~e., we only consider $\alpha$ in the open unit ball $\underline{\mathring{\B}}_1^k$ in the parameter space, and we shall restrict the conic vector field $\vec{v}_\alpha$ to the open unit ball $\mathring{\B}_1^k\subset\R^k$.
\end{remark}

\begin{lemma}\label{lem:away}
Let $\alpha\in \underline{\mathring{\B}}_1^k$ and consider the conic vector field $\vec{v}_\alpha(y)$. Then we have
\begin{equation*}
\langle \vec{v}_\alpha(y),y\rangle>0,\quad\text{for every $y\in\mathring{\B}_1^k$.}
\end{equation*}
\end{lemma}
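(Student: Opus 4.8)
The plan is to reduce the statement to the single algebraic identity already used in the proof of Proposition~\ref{prop:source.tranverse}, and then to estimate the resulting cross term by the Cauchy--Schwarz inequality.

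First I would expand, using the definition~\eqref{eq:source.vf} of the conic vector field,
\[
\langle \vec v_\alpha(y),y\rangle
= \langle y+\|y\|^2\alpha,\,y\rangle
= \|y\|^2 + \|y\|^2\langle\alpha,y\rangle
= \|y\|^2\bigl(1+\langle\alpha,y\rangle\bigr).
\]
For $y\neq 0$ the factor $\|y\|^2$ is strictly positive, so the claim becomes equivalent to the inequality $\langle\alpha,y\rangle>-1$ (for $y=0$ the pairing vanishes, so there is nothing to prove there; the content of the lemma is the strict positivity away from the origin, which is also the case relevant in the sequel since $0$ is a zero of $\vec v_\alpha$).

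Next I would bound the inner product by Cauchy--Schwarz: $\langle\alpha,y\rangle \ge -\,\lvert\langle\alpha,y\rangle\rvert \ge -\,\|\alpha\|\,\|y\|$. Since $\alpha\in\underline{\mathring{\B}}_1^k$ and $y\in\mathring{\B}_1^k$, both norms are strictly less than $1$, hence $\|\alpha\|\,\|y\|<1$; therefore $\langle\alpha,y\rangle>-1$, and so $\langle \vec v_\alpha(y),y\rangle=\|y\|^2\bigl(1+\langle\alpha,y\rangle\bigr)>0$.

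I do not anticipate any genuine obstacle here: the argument is a one-line computation followed by one application of Cauchy--Schwarz. The only point deserving a little care is the strictness of the conclusion, which is exactly why one restricts to $\alpha$ in the \emph{open} unit ball $\underline{\mathring{\B}}_1^k$. If instead $\|\alpha\|=1$, then along the segment joining $0$ to the non-trivial zero $y_\alpha=-\alpha/\|\alpha\|^2$ of Proposition~\ref{prop:other.zero} the quantity $\langle\vec v_\alpha(y),y\rangle$ tends to $0$, so strict positivity throughout the whole open unit ball genuinely uses the hypothesis $\|\alpha\|<1$ (together with $\|y\|<1$) to keep $\|\alpha\|\,\|y\|$ bounded away from $1$ from below only in the sense $<1$, which is all that is needed.
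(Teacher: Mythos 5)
Your proof is correct and follows essentially the same route as the paper: expand the pairing to get $\|y\|^2\bigl(1+\langle\alpha,y\rangle\bigr)$ and bound $|\langle\alpha,y\rangle|<1$, which the paper phrases via $\|\alpha\|\|y\|\cos\theta$ and you phrase via Cauchy--Schwarz (the same estimate). Your remark that the inequality is only strict for $y\neq 0$ is a small point the paper's own proof glosses over, and is a fair observation.
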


\begin{proof}
We have that
\begin{equation*}
\langle \vec{v}_\alpha(y),y\rangle=\langle y+\Vert y\Vert^2\alpha,y\rangle=\Vert y\Vert^2\bigl(1+\langle\alpha,y\rangle\bigr)=\Vert y\Vert^2\bigl(1+\Vert\alpha\Vert\Vert y\Vert\cos\theta\bigr)
\end{equation*}
where $\theta$ is the angle between $\alpha$ and $y$ in $\R^k$. Since $\Vert\alpha\Vert<1$ and $\Vert y\Vert<1$ we have that $\bigl\vert\Vert\alpha\Vert\Vert y\Vert\cos\theta\bigr\vert<1$, so the expression between parenthesis is always positive. Thus $\langle \vec{v}_\alpha(y),y\rangle>0$.
\end{proof}

\begin{remark}
Lemma~\ref{lem:away} says that for $\alpha\in \underline{\mathring{\B}}_1^k$ the vector $\vec{v}_\alpha(y)$ points ``away'' from $0\in\R^k$ for all $y\in\mathring{\B}_1^k$.
\end{remark}

Figure~\ref{fig:curves} shows the conic vector field  of Figure~\ref{fig:singularity} but restricted to the ball $\mathring{\B}_{\frac{1}{2}}^k$ where it has only one zero at the origin and is pointing away from it.

\begin{figure}[!h]
\centering
\includegraphics[scale=0.5]{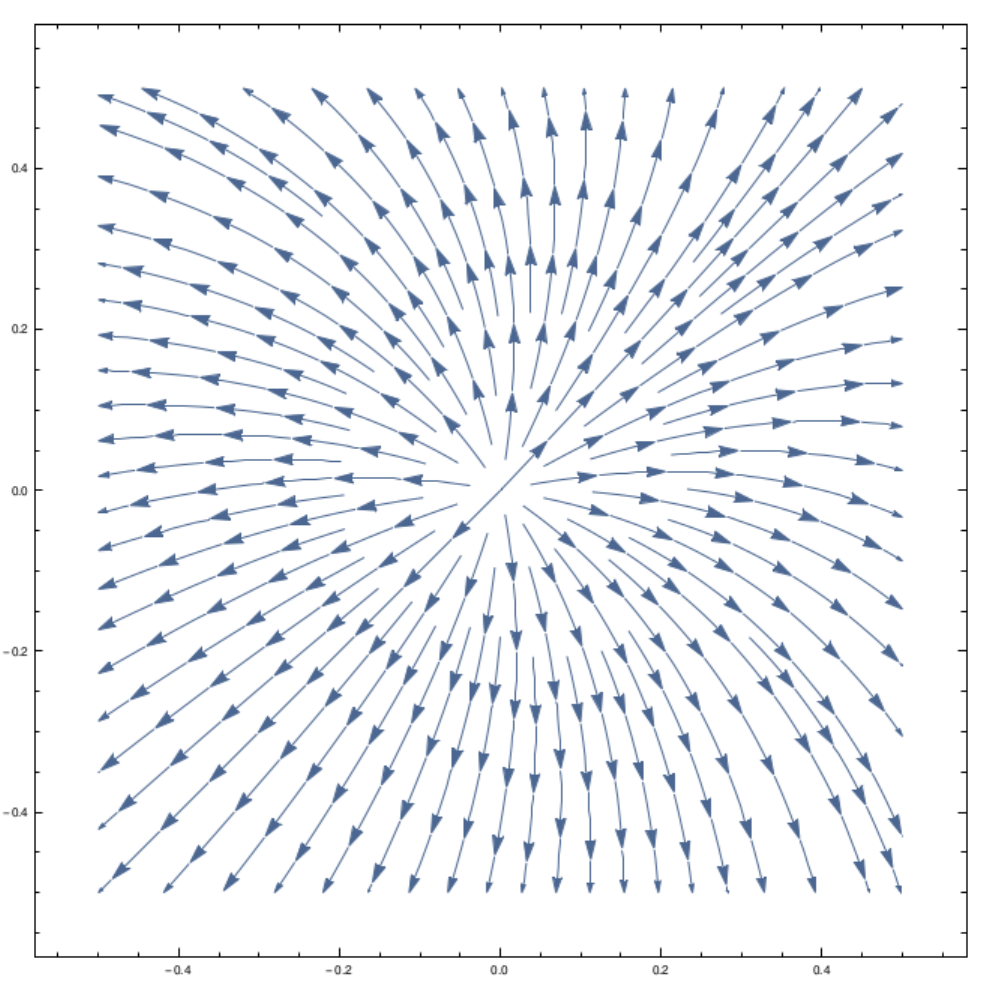}
\caption{The conic vector fields $\vec v_\alpha$ in $\R^2$ given by $\alpha=(-\frac{1}{2},-\frac{1}{2})$ on $\mathring{\B}_{\frac{1}{2}}^k$.}
\label{fig:curves}
\end{figure}

Now let us see that the conic vector field $\vec{v}_\alpha$ defines a conic homeomorphism $h_\alpha\colon\B_\eta^k\to\B_\eta^k$ on any closed ball $\B_\eta^k$ of radius $0<\eta<1$ centred at $0\in\R^k$  (compare with \cite[page~20]{Milnor:SPCH}).

Let $\alpha\in \underline{\mathring{\B}}_1^k$ and consider the conic vector field $\vec{v}_\alpha$ defined on the open unit ball $\mathring{\B}_1^k\subset\R^k$. Normalize $\vec{v}_\alpha$ by setting
\begin{equation}
\hat{v}_\alpha(y)=\frac{\vec{v}_\alpha(y)}{\langle2y,\vec{v}_\alpha(y)\rangle}
\end{equation}
and consider the integral curves of $\hat{v}_\alpha$, that is, the smooth solution curves $y=p(t)$, defined say for $a<t<b$, of the differential equation
\begin{equation}\label{eq:ODE}
\frac{dp(t)}{dt}=\hat{v}_\alpha(p(t)).
\end{equation}
Let $r\colon\R^k\to\R$ be the ``norm squared function'' $r(y)=\Vert y\Vert^2$. Given any solution $p(t)$, the derivative of the composition $r\circ p\colon(a,b)\to\R$ is given by
\begin{equation*}\label{eq:drp=1}
\frac{d(r\circ p)}{dt}=\langle2y,\hat{v}_\alpha(y)\rangle=\frac{\langle2y,\vec{v}_\alpha(y)\rangle}{\langle2y,\vec{v}_\alpha(y)\rangle}=1
\end{equation*}
where $y=p(t)$. So we have $r(p(t))=t+\text{constant}$. Thus, subtracting a constant from the parameter $t$ if necessary, we may suppose that
\begin{equation}\label{eq:rpt=t}
r(p(t))=\Vert p(t)\Vert^2=t.
\end{equation}
This solution can be extended through the interval $(0,\eta^2]$ (see \cite[page~20]{Milnor:SPCH}). Note that the integral curve $p(t)$ is uniquely determined by the initial value $p(\eta^2)=\theta\in\BS_\eta^{k-1}$. For each $\theta\in\BS_\eta^{k-1}$ denote by
\begin{equation*}
 P(\theta,t),\quad 0<t\leq\eta^2,
\end{equation*}
the unique integral curve which satisfies the initial condition
\begin{equation*}
 P(\theta,\eta^2)=\theta.
\end{equation*}
Let $\CC_\theta^\alpha=P\bigl(\theta,(0,\eta^2)\bigr)$, i.~e., the image of the integral curve $P(\theta,t)$.
We have that the function
\begin{equation*}
P\colon\BS_\eta^{k-1}\times(0,\eta^2]\to\B_\eta^k\setminus\{0\}
\end{equation*}
is a diffeomorphism.
Finally, note that $P(\theta,t)$ tends uniformly to $0\in\BS_\eta^{k-1}$ as $t\to0$. Therefore the correspondence
\begin{equation*}
 t\theta\to P(\theta,t\eta^2)
\end{equation*}
defined for $0<t\leq 1$, extends uniquely to a conic homeomorphism
\begin{equation}
h_\alpha\colon\B_\eta^k\to\B_\eta^k
\end{equation}
since outside the origin is a diffeomorphism and sends the open interval $\LL_\theta$ to the curve $\CC_\theta^\alpha$.

These conic homeomorphisms will be used in the next section to prove the existence of suitable parameters for any map satisfying the transversality property.

\section{Suitable vector fields}

Let $f\colon (\R^n,0) \to (\R^k,0)$, with $2 \leq k \leq n$, be a locally surjective analytic map with an isolated critical value at $0\in\R^k$ satisfying the transversality property.

\begin{defi}
Let $x\in\R^n\setminus f^{-1}(0)$, let $N_x(f^{-1}(f(x)))$ be the normal subspace to the tangent space $T_x(f^{-1}(f(x)))$ of the fibre of $f$ which contains $x$.
Given a vector field $\vec{v}$ in $\R^k\setminus \{0\}$ the lifting $\vec{u}$ given by the isomorphism
\begin{equation*}
(Df_x)_|: N_x \big(f^{-1}(f(x)) \big) \to T_{f(x)} \big( \R^k \big),
\end{equation*}
is called \textit{the canonical $f$-lifting} of $\vec{v}$.
\end{defi}

For each $\alpha \in \underline{\mathring{\B}}_1^k$ let $\vec{u}_\alpha$ be the canonical $f$-lifting of the conic vector field $\vec v_\alpha$.

\begin{defi}
A vector field $\vec{w}$ on $\R^n\setminus f^{-1}(0)$ is called an \textit{$f$-vertical vector field} if $\vec{w}(x)\in T_x(f^{-1}(f(x)))$ for every $x\in\R^n\setminus f^{-1}(0)$.
\end{defi}

\begin{defi}
Let $\vec{v}$ be a vector field on $\R^k\setminus \{0\}$ and let $\vec{u}$ be its canonical $f$-lifting. Given a $f$-vertical vector field $\vec{w}$ on $\R^n\setminus f^{-1}(0)$, the vector field given by
\begin{equation*}
 \vec{u}^{\vec{w}}(x):=\vec{u}(x)+\vec{w}(x)
\end{equation*}
is a $f$-lifting of $\vec{v}$, which we call \textit{the $f$-lifting of $\vec{v}$ determined by $\vec{w}$}.
\end{defi}

\begin{defi} \label{defi_1}
By \cite[Theorem~5.1, Remark~5.7]{Cisneros-Seade-Snoussi:d-regular} $f$ has a Milnor-Lê fibration \eqref{eq:RMLF} on a Milnor tube $T_{\varepsilon,\delta}(f)$ with $0<\delta\ll\varepsilon$.
\begin{itemize}
\item[$(i)$] We say that $\alpha \in \underline{\mathring{\B}}_1^k$ is a \textit{suitable parameter for $f$ at a point $p \in \R^n$} if there is a lifting $\vec{u}'_\alpha$ of $\vec{v}_\alpha$ such that $\langle \vec{u}'_\alpha(p),p  \rangle \neq 0$;
\item[$(ii)$] We say that $\alpha \in \underline{\mathring{\B}}_1^k$ is a \textit{suitable parameter for $f$ in a subset $W \subset \R^n$} if there is a lifting $\vec{u}'_\alpha$ of $\vec{v}_\alpha$ on $W$ such that $\langle \vec{u}'_\alpha(p),p  \rangle \neq 0$ for every $p \in W$, that is, $\alpha$ is a suitable parameter for $f$ at $p$ for every $p \in W$ with respect to the same lifting $\vec{u}'_\alpha$ of $\vec{v}_\alpha$;
\item[$(iii)$] We say that $\alpha \in \underline{\mathring{\B}}_1^k$ is a \textit{suitable parameter for $f$} if for every $p\in\B_\varepsilon^n \setminus \mathring{T}_{\varepsilon,\delta}(f)$, outside the open solid Milnor tube
\begin{equation*}
\mathring{T}_{\varepsilon,\delta}(f) := \B_\varepsilon^n \cap f^{-1}(\mathring{\B}_\delta^k),
\end{equation*}
there exist an open neighbourhood $W_p$ of $p$ where $\alpha$ is a suitable parameter for $f$ in $W_p$.
In this case, we say that the corresponding vector field $\vec v_\alpha$ in $\B_\eta^k$ is a \textit{suitable vector field for $f$}.
\end{itemize}
\end{defi}

By the definition of a suitable parameter for $f$ we have:

\begin{prop} \label{theo_main}
Let $f\colon (\R^n,0) \to (\R^k,0)$, with $2 \leq k \leq n$, be an analytic map-germ with isolated critical value. If $\alpha \in \Rp^k$ is a suitable parameter for $f$, then $f$ is $d_{h_\alpha}$-regular (outside $\mathring{T}_{\varepsilon,\delta}(f)$) with respect to the conic homeomorphism $h_\alpha$ given by the conic vector field $\vec{v}_\alpha$.
\end{prop}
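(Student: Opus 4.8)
The plan is to unwind the definition of $d_{h_\alpha}$-regularity outside $\mathring{T}_{\varepsilon,\delta}(f)$ and to check that, point by point, it is nothing but the inequality defining a suitable parameter. Recall that $d_{h_\alpha}$-regularity asks that for every $\varepsilon'$ with $0<\varepsilon'\le\varepsilon$ the sphere $\BS_{\varepsilon'}^{n-1}$ meet $E_\theta=f^{-1}(\CC_\theta^\alpha)$ transversely at each point $p\in\BS_{\varepsilon'}^{n-1}\cap E_\theta$ lying outside the open solid Milnor tube, for every $\theta\in\BS_\eta^{k-1}$, where $\CC_\theta^\alpha=h_\alpha(\LL_\theta)$ is the image of the integral curve of $\hat v_\alpha$ constructed above. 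Since $T_p\BS_{\varepsilon'}^{n-1}=\{\,v\in\R^n\mid\langle v,p\rangle=0\,\}$ is a hyperplane, transversality at such a $p$ is equivalent to $T_pE_\theta\not\subset T_p\BS_{\varepsilon'}^{n-1}$, that is, to the existence of a single vector $v\in T_pE_\theta$ with $\langle v,p\rangle\neq0$. So the whole statement reduces to producing such a $v$ at every relevant $p$.

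First I would describe $T_pE_\theta$ explicitly. Because $\CC_\theta^\alpha=P\bigl(\theta,(0,\eta^2)\bigr)$ is the image of an integral curve of the normalized conic vector field $\hat v_\alpha$, its tangent line at a point $y$ is spanned by $\vec v_\alpha(y)$; after shrinking $\varepsilon$ so that $f(\B_\varepsilon^n)$ lies in a ball of radius $<1$ about $0\in\R^k$, Corollary~\ref{cor:alpha.small} gives $\vec v_\alpha(y)\neq0$ for $y=f(p)\neq0$ and Lemma~\ref{lem:away} gives $\langle 2y,\vec v_\alpha(y)\rangle>0$, so $\hat v_\alpha$ — and hence $\CC_\theta^\alpha$ — is genuinely smooth there with tangent line $\mathrm{span}\,\vec v_\alpha(y)$. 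Moreover $p\notin f^{-1}(0)$ because $\CC_\theta^\alpha$ avoids the origin, and since $f$ has an isolated critical value at $0$, every point of $f^{-1}(\R^k\setminus\{0\})$ is a regular point; hence $Df_p$ is surjective, $E_\theta$ is a smooth submanifold near $p$, and $T_pE_\theta=(Df_p)^{-1}\bigl(\mathrm{span}\,\vec v_\alpha(f(p))\bigr)$.

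Then I would bring in the hypothesis. Since $\alpha$ is a suitable parameter for $f$ and $p\in\B_\varepsilon^n\setminus\mathring{T}_{\varepsilon,\delta}(f)$, Definition~\ref{defi_1}$(iii)$ provides a neighbourhood $W_p$ of $p$ and an $f$-lifting $\vec u'_\alpha$ of $\vec v_\alpha$ on $W_p$ with $\langle\vec u'_\alpha(q),q\rangle\neq0$ for all $q\in W_p$, in particular $\langle\vec u'_\alpha(p),p\rangle\neq0$. Being an $f$-lifting of $\vec v_\alpha$ means precisely $Df_p\bigl(\vec u'_\alpha(p)\bigr)=\vec v_\alpha(f(p))$, so $\vec u'_\alpha(p)\in(Df_p)^{-1}\bigl(\mathrm{span}\,\vec v_\alpha(f(p))\bigr)=T_pE_\theta$. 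Taking $v:=\vec u'_\alpha(p)$ yields a vector in $T_pE_\theta$ with $\langle v,p\rangle\neq0$, hence $T_pE_\theta+T_p\BS_{\varepsilon'}^{n-1}=\R^n$, which is exactly the required transversality. As $p$, $\theta$ and $\varepsilon'$ were arbitrary (subject to $0<\varepsilon'\le\varepsilon$ and $p$ outside the open solid Milnor tube), $f$ is $d_{h_\alpha}$-regular outside $\mathring{T}_{\varepsilon,\delta}(f)$.

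The argument is short precisely because the earlier definitions were arranged to make it so, and I expect no deep difficulty; the one place that needs care is the bookkeeping of the constants. One must choose $\varepsilon$ small enough that $f(\B_\varepsilon^n)$ lies in the region of $\R^k$ where $\vec v_\alpha$ has no zero other than the origin and is transverse to every sphere centred at $0$ — so that $\CC_\theta^\alpha$ is a well-defined smooth arc with tangent line $\mathrm{span}\,\vec v_\alpha$ — while remaining compatible with the radii $0<\delta\ll\varepsilon$ of the Milnor--L\^e fibration from \cite{Cisneros-Seade-Snoussi:d-regular} and with the requirement $f(\B_\varepsilon^n)\subset\mathring{\mathcal B}_{\eta_0}^k$ built into the definition of $d_{h_\alpha}$-regularity. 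Since $f(0)=0$, one may shrink $\varepsilon$ as much as needed, so these conditions are merely ordered rather than in conflict; this is the part of the write-up to handle carefully, but it is not a genuine obstacle.
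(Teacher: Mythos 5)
Your proof is correct and follows essentially the same route as the paper: both arguments reduce $d_{h_\alpha}$-regularity to exhibiting, at each point $p$ outside the open solid Milnor tube, a vector of $T_pE_\theta$ not tangent to the sphere, and both obtain it as the lifting $\vec{u}'_\alpha(p)$ supplied by the definition of a suitable parameter. Your write-up actually fills in details the paper leaves implicit (the identification $T_pE_\theta=(Df_p)^{-1}\bigl(\mathrm{span}\,\vec v_\alpha(f(p))\bigr)$ and the bookkeeping of radii), so no gap to report.
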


\begin{proof}
Since $f\colon (\R^n,0) \to (\R^k,0)$ has isolated critical value at $0\in\R^k$ we have that the set $$E_\theta := f^{-1}(\CC_\theta^\alpha)$$ is a submanifold of $\R^n\setminus f^{-1}(0)$.
Since $\alpha \in \Rp^k$ is a suitable parameter for $f$, for every $p\in\B_\varepsilon^n \setminus \mathring{T}_{\varepsilon,\delta}(f)$ there exist an open neighbourhood $W_p$ of $p$ an a lifting $\vec{u}'_\alpha$ of $\vec{v}_\alpha$ on $W$ such that $\langle \vec{u}'_\alpha(p),p  \rangle \neq 0$ for every $p \in W$.
This implies that $E_\theta$ intersects the spheres $\BS_{\varepsilon'}^{n-1}$ transversely in $\B_\varepsilon^n \setminus \mathring{T}_{\varepsilon,\delta}(f)$, for every $0<\varepsilon' \leq \varepsilon$.
In other words, $f$ is $d_h$-regular.
\end{proof}

Now the goal is to prove that suitable parameters for $f$ always exist.

\begin{lemma}\label{lem:balls}
Let $p\in \B_\varepsilon^n \setminus \mathring{T}_{\varepsilon,\delta}(f)$ and let $\vec{w}$ be a $f$-vertical vector field on an open neighbourhood $W'_p$ of $p$ in $\B_\e^n \setminus \mathring{T}_{\varepsilon,\delta}(f)$. Suppose that $\tilde{\alpha}\in\underline{\mathring{\B}}_1^k$ is such that the $f$-lifting $\vec{u}_{\tilde{\alpha}}^{\vec{w}}$ determined by $\vec{w}$ is transverse to $\BS_{\|p\|}^{n-1}$ at $p$, with $\langle \vec{u}_{\tilde{\alpha}}^{\vec{w}}(p),p  \rangle > 0$ (respectively $\langle \vec{u}_{\tilde{\alpha}}^{\vec{w}}(p),p  \rangle < 0$). Then there exists a small neighbourhood $W_p\subset W'_p$ of $p$ and a real number $\nu_p>0$ such that for every $\alpha$ in the ball $\underline{\mathring{\B}}^k_{\nu_p}({\tilde{\alpha}})$ in $\underline{\mathring{\B}}_1^k$ of radius $\nu_p$ centered at $\tilde{\alpha}$, and for every $x\in W_p$, one has that $\langle \vec{u}_\alpha^{\vec{w}}(x),x  \rangle > 0$ (respectively $\langle \vec{u}_\alpha^{\vec{w}}(x),x  \rangle < 0$).
\end{lemma}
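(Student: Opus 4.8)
The plan is to reduce the statement to the elementary fact that a continuous real-valued function which is positive at a point stays positive on a neighbourhood of that point, applied to the function
\[
g(\alpha,x) := \bigl\langle \vec{u}_\alpha^{\vec{w}}(x),\, x \bigr\rangle
= \bigl\langle \vec{u}_\alpha(x) + \vec{w}(x),\, x \bigr\rangle,
\]
defined for $\alpha \in \underline{\mathring{\B}}_1^k$ and $x \in W'_p$. First I would note that $W'_p \subset \B_\varepsilon^n \setminus \mathring{T}_{\varepsilon,\delta}(f)$ is disjoint from $f^{-1}(0)$, since $f^{-1}(0)\cap\B_\varepsilon^n \subset \mathring{T}_{\varepsilon,\delta}(f)$; hence, because $f$ has an isolated critical value at $0$, the map $f$ is a submersion on $W'_p$, so for each $x \in W'_p$ the normal space $N_x\bigl(f^{-1}(f(x))\bigr)$ has dimension $k$ and $(Df_x)_|\colon N_x\bigl(f^{-1}(f(x))\bigr) \to T_{f(x)}\R^k$ is a linear isomorphism depending smoothly on $x$, and therefore so is its inverse. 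Since $\vec{v}_\alpha(y) = y + \|y\|^2\alpha$ is polynomial in $(\alpha,y)$ and $f$ is analytic, the canonical $f$-lifting $\vec{u}_\alpha(x) = \bigl((Df_x)_|\bigr)^{-1}\bigl(\vec{v}_\alpha(f(x))\bigr)$ is jointly continuous in $(\alpha,x)$; adding the (continuous) $f$-vertical field $\vec{w}$ and pairing with $x$ shows that $g$ is continuous on $\underline{\mathring{\B}}_1^k \times W'_p$.

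Next, the transversality of $\vec{u}_{\tilde{\alpha}}^{\vec{w}}$ to $\BS_{\|p\|}^{n-1}$ at $p$ together with $\langle \vec{u}_{\tilde{\alpha}}^{\vec{w}}(p), p\rangle > 0$ says precisely that $g(\tilde{\alpha},p) > 0$ (indeed transversality to the sphere through $p$ amounts to $\langle\,\cdot\,, p\rangle \neq 0$, and here this value is prescribed positive). Since $g$ is continuous and $\tilde{\alpha}$ lies in the open set $\underline{\mathring{\B}}_1^k$, there is a product neighbourhood $\underline{\mathring{\B}}^k_{\nu_p}({\tilde{\alpha}}) \times W_p$ of $(\tilde{\alpha},p)$, with $\underline{\mathring{\B}}^k_{\nu_p}({\tilde{\alpha}}) \subset \underline{\mathring{\B}}_1^k$ and $W_p \subset W'_p$, on which $g$ remains positive. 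This is exactly the claimed inequality $\langle \vec{u}_\alpha^{\vec{w}}(x), x\rangle > 0$ for all $\alpha \in \underline{\mathring{\B}}^k_{\nu_p}({\tilde{\alpha}})$ and $x \in W_p$; in particular $\vec{u}_\alpha^{\vec{w}}$ is then transverse to $\BS_{\|x\|}^{n-1}$ at each such $x$. The case $\langle \vec{u}_{\tilde{\alpha}}^{\vec{w}}(p), p\rangle < 0$ is handled identically, by applying the same argument to $-g$.

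The only delicate point — and the closest thing to an obstacle — is justifying the joint continuity of the canonical $f$-lifting $\vec{u}_\alpha$ in $(\alpha,x)$. This is where the hypothesis of an isolated critical value is used essentially: it guarantees that $f$ restricted to $\B_\varepsilon^n \setminus \mathring{T}_{\varepsilon,\delta}(f)$ is a submersion, so that $(Df_x)_|$ is an isomorphism onto $T_{f(x)}\R^k$ varying smoothly with $x$, whence $x \mapsto \bigl((Df_x)_|\bigr)^{-1}$ is smooth; the dependence on $\alpha$ is then trivially smooth, since $\alpha$ enters only through the polynomial vector field $\vec{v}_\alpha$. Everything else reduces to the routine sign-preservation argument for continuous functions.
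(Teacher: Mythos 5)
Your proposal is correct and follows essentially the same route as the paper: the authors also define the function $g_p(x,\alpha)=\langle \vec{u}_\alpha^{\vec{w}}(x),x\rangle$ on $W'_p\times\underline{\mathring{\B}}_1^k$ and extract a product neighbourhood of $(p,\tilde{\alpha})$ on which the sign is preserved, via openness of $g_p^{-1}\bigl(\,]0,2c[\,\bigr)$. Your extra paragraph justifying joint continuity of the canonical lifting (via $f$ being a submersion off $f^{-1}(0)$) is a detail the paper leaves implicit, but it does not change the argument.
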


\begin{proof}
Define the map
\begin{align*}
g_p\colon W'_p \times \underline{\mathring{\B}}_1^k &\longrightarrow \R,\\
(x,\alpha) &\longmapsto \langle \vec{u}_\alpha^{\vec{w}}(x),x  \rangle.
\end{align*}
By hypothesis $g_p(p,\tilde{\alpha}) = \langle \vec{u}_{\tilde{\alpha}}^{\vec{w}}(p),p  \rangle=c$, with $c>0$ (respectively $c<0$). Hence the set $U:=g_p^{-1}\big( ]0,2c[ \big)$ 
(respectively $U:=g_p^{-1}\big( ]2c,0[ \big)$) is open in $W'_p \times \mathring{\B}_1^k$
and contains the point $(p,\tilde{\alpha})$. Thus, there exist an open neighborhood $W_p(\tilde{\alpha})$ of $p$ in $W'_p$ and an open ball $\underline{\mathring{\B}}_{\nu_p(\tilde{\alpha})}^k(\tilde{\alpha})$ centered at
$\tilde{\alpha} \in \underline{\mathring{\B}}_1^k$ such that $(p,\tilde{\alpha}) \in W_p(\tilde{\alpha}) \times \B_{\nu_p(\tilde{\alpha})}^k(\tilde{\alpha}) \subset U$.
Therefore $\langle \vec u_\alpha (x),x  \rangle >0$ (respectively $\langle \vec u_\alpha (x),x  \rangle <0$) for every $x \in W_p(\tilde{\alpha})$ and for every $\alpha \in \B_{\nu_p(\tilde{\alpha})}(\tilde{\alpha})$.
\end{proof}

\begin{remark}
Notice that in Lemma~\ref{lem:balls} if we take the vertical vector field $\vec{w}$ to be the zero-vector field, the statement specializes to the canonical liftings $\vec{u}_\alpha$ of
the vector fields $\vec{v}_\alpha$.
\end{remark}

In other words, Lemma~\ref{lem:balls} says:
\begin{cor}\label{cor:balls}
If $\tilde{\alpha}$ is a suitable parameter for $f$ at a point $p \in \B_\varepsilon^n \setminus \mathring{T}_{\varepsilon,\delta}(f)$ then there exist a neighborhood $W_p(\tilde{\alpha})$ of $p$ in $\B_\varepsilon^n \setminus \mathring{T}_{\varepsilon,\delta}(f)$ and a radius $\nu_p(\tilde{\alpha})>0$ such that every $\alpha \in \underline{\mathring{\B}}_{\nu_p(\tilde{\alpha})}^k (\tilde{\alpha})$ is a suitable parameter for $f$ in $W_p(\tilde{\alpha})$.
\end{cor}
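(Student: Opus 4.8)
The plan is to deduce the statement directly from Lemma~\ref{lem:balls}, the only work being to rephrase the hypothesis ``$\tilde{\alpha}$ is a suitable parameter for $f$ at $p$'' in the form required by that lemma. First I would note that, since $p\in\B_\varepsilon^n\setminus\mathring{T}_{\varepsilon,\delta}(f)$, we have $f(p)\neq 0$; as $0$ is the only critical value of $f$, the point $p$ is a regular point, so $f$ has rank $k$ on an open neighbourhood of $p$ contained in $\B_\varepsilon^n\setminus\mathring{T}_{\varepsilon,\delta}(f)$. Consequently the fibres of $f$ near $p$ form a smooth codimension-$k$ foliation, the canonical $f$-lifting $\vec{u}_{\tilde{\alpha}}$ of $\vec{v}_{\tilde{\alpha}}$ is defined on a neighbourhood of $p$, and every tangent vector to the fibre through $p$ extends to an $f$-vertical vector field on a small neighbourhood of $p$.

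By Definition~\ref{defi_1}$(i)$ there is a lifting $\vec{u}'_{\tilde{\alpha}}$ of $\vec{v}_{\tilde{\alpha}}$ with $\langle\vec{u}'_{\tilde{\alpha}}(p),p\rangle\neq 0$. Both $\vec{u}'_{\tilde{\alpha}}(p)$ and $\vec{u}_{\tilde{\alpha}}(p)$ are mapped to $\vec{v}_{\tilde{\alpha}}(f(p))$ by $Df_p$, so their difference lies in $\ker Df_p=T_p\bigl(f^{-1}(f(p))\bigr)$; I would extend this difference to an $f$-vertical vector field $\vec{w}$ on an open neighbourhood $W'_p$ of $p$ in $\B_\varepsilon^n\setminus\mathring{T}_{\varepsilon,\delta}(f)$, using the foliation observation above. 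Then $\vec{u}_{\tilde{\alpha}}^{\vec{w}}=\vec{u}_{\tilde{\alpha}}+\vec{w}$ is the $f$-lifting of $\vec{v}_{\tilde{\alpha}}$ determined by $\vec{w}$, and $\vec{u}_{\tilde{\alpha}}^{\vec{w}}(p)=\vec{u}'_{\tilde{\alpha}}(p)$, so $\langle\vec{u}_{\tilde{\alpha}}^{\vec{w}}(p),p\rangle\neq 0$; equivalently $\vec{u}_{\tilde{\alpha}}^{\vec{w}}$ is transverse to $\BS_{\|p\|}^{n-1}$ at $p$, the value $\langle\vec{u}_{\tilde{\alpha}}^{\vec{w}}(p),p\rangle$ being either positive or negative. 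In either case the hypotheses of Lemma~\ref{lem:balls} hold with this $\vec{w}$.

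Finally I would apply Lemma~\ref{lem:balls} to obtain a neighbourhood $W_p(\tilde{\alpha})\subset W'_p$ of $p$ and a radius $\nu_p(\tilde{\alpha})>0$ such that $\langle\vec{u}_\alpha^{\vec{w}}(x),x\rangle\neq 0$ for every $\alpha\in\underline{\mathring{\B}}^k_{\nu_p(\tilde{\alpha})}(\tilde{\alpha})$ and every $x\in W_p(\tilde{\alpha})$ (in fact of one constant sign throughout, according to the case). For each such $\alpha$ the single $f$-lifting $\vec{u}_\alpha^{\vec{w}}$ of $\vec{v}_\alpha$, restricted to $W_p(\tilde{\alpha})$, then witnesses via Definition~\ref{defi_1}$(ii)$ that $\alpha$ is a suitable parameter for $f$ in $W_p(\tilde{\alpha})$, which is the assertion. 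The argument is essentially bookkeeping; the only point deserving care is the extension of the vertical correction term $\vec{w}$ to a neighbourhood of $p$ — which is exactly where the regularity of $p$ (coming from the isolated critical value hypothesis) enters — together with the harmless sign dichotomy already built into Lemma~\ref{lem:balls}.
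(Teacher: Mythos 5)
Your proposal is correct and follows the same route as the paper, which states the corollary as an immediate rephrasing of Lemma~\ref{lem:balls} without further argument. The only added content in your write-up is the (worthwhile) bookkeeping step of replacing the arbitrary lifting $\vec{u}'_{\tilde{\alpha}}$ from Definition~\ref{defi_1}$(i)$ by a lifting of the form $\vec{u}_{\tilde{\alpha}}^{\vec{w}}$ via an $f$-vertical extension of the difference vector, which the paper leaves implicit.
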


We have:

\begin{prop} \label{prop_1}
Let $f\colon (\R^n,0) \to (\R^k,0)$, with $2 \leq k \leq n$, be a locally surjective analytic map-germ with isolated critical value. Then there exist a real number $\omega$, with $0<\omega< 1$
such that every $\alpha \in \underline{\mathring{\B}}_\omega^k\subset\underline{\mathring{\B}}_1^k$ is a suitable parameter for $f$.
\end{prop}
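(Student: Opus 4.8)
The strategy is to produce a single parameter $\alpha$ that works \emph{simultaneously} at every point of the compact set $K := \B_\varepsilon^n \setminus \mathring{T}_{\varepsilon,\delta}(f)$, by first showing that every individual point admits a suitable parameter arbitrarily close to $0 \in \Rp^k$, then upgrading to a uniform statement by a compactness argument. First I would fix a point $p \in K$ and show that $\tilde\alpha = 0$ is already a suitable parameter for $f$ at $p$: the canonical $f$-lifting $\vec u_0$ of the radial vector field $\vec v_0(y)=y$ is, up to the identification via $(Df_p)_|$, the component of the radial field on $\R^n$ normal to the fibre, and since $p \in K$ lies outside the open solid Milnor tube, the transversality property (the fibres in the tube being transverse to the spheres) together with the standard fact that on $K$ the fibres meet the spheres transversely gives $\langle \vec u_0(p), p \rangle \neq 0$. (If at some boundary points of the tube this inner product could vanish, one instead notes that the transversality property is an open condition and shrinks the tube slightly, as the Remark after the definition of $d_h$-regularity already permits.) So every $p \in K$ has \emph{some} suitable parameter, namely a small $\tilde\alpha$ — in the cleanest case $\tilde\alpha = 0$.

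Next I would invoke Corollary~\ref{cor:balls}: for each $p \in K$ there is a neighbourhood $W_p(\tilde\alpha)$ of $p$ and a radius $\nu_p = \nu_p(\tilde\alpha) > 0$ such that every $\alpha \in \underline{\mathring{\B}}^k_{\nu_p}(\tilde\alpha)$ is a suitable parameter for $f$ in $W_p(\tilde\alpha)$, with respect to a fixed lifting $\vec u^{\vec w}_\alpha$ (determined by a vertical field $\vec w$, which one may take to be zero when $\tilde\alpha = 0$). The open sets $\{W_p(\tilde\alpha)\}_{p \in K}$ cover the compact set $K$, so finitely many of them, say $W_{p_1}, \dots, W_{p_m}$, already cover $K$, with associated base parameters $\tilde\alpha_1, \dots, \tilde\alpha_m$ and radii $\nu_{p_1}, \dots, \nu_{p_m}$.

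The final step is to choose $\omega > 0$ small enough that the ball $\underline{\mathring{\B}}^k_\omega$ lies inside \emph{every} $\underline{\mathring{\B}}^k_{\nu_{p_i}}(\tilde\alpha_i)$. This is immediate when one can take each $\tilde\alpha_i = 0$: simply set $\omega := \min_i \nu_{p_i} > 0$ (further shrunk below $1$), so that $\underline{\mathring{\B}}^k_\omega \subset \bigcap_i \underline{\mathring{\B}}^k_{\nu_{p_i}}(0)$. Then for any $\alpha \in \underline{\mathring{\B}}^k_\omega$ and any $p \in K$, pick $i$ with $p \in W_{p_i}$; since $\alpha \in \underline{\mathring{\B}}^k_{\nu_{p_i}}(\tilde\alpha_i)$, Corollary~\ref{cor:balls} gives that $\alpha$ is a suitable parameter for $f$ in $W_{p_i}$, hence in particular at $p$ with respect to the lifting attached to $p_i$. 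Taking $W_p$ to be any of the $W_{p_i}$ containing $p$ verifies clause $(iii)$ of Definition~\ref{defi_1}, so $\alpha$ is a suitable parameter for $f$.

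**Main obstacle.** The delicate point is the very first step — guaranteeing that a \emph{small} parameter (ideally $\tilde\alpha=0$) is suitable at each $p \in K$, i.e.\ that $\langle \vec u_0(p), p\rangle \neq 0$. This is exactly the assertion that the canonical radial lifting is transverse to the spheres outside the (open) solid Milnor tube, and it is where the transversality property and the passage to a slightly smaller tube are genuinely used; if this failed at some $p$ one would need a nonzero base point $\tilde\alpha_p$ there, and then the compactness argument must also control the finitely many distinct base points $\tilde\alpha_i$, choosing $\omega$ so small that $\underline{\mathring{\B}}^k_\omega$ still meets each $\underline{\mathring{\B}}^k_{\nu_{p_i}}(\tilde\alpha_i)$ — which may force one to recenter and instead prove the weaker statement that \emph{some} small ball of suitable parameters exists rather than one centered at the origin. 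I expect the paper takes the clean route: $\tilde\alpha = 0$ works everywhere on $K$ (after the harmless shrinking of the tube), making the compactness argument for $\omega$ routine.
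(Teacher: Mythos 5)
Your compactness step coincides with the paper's: cover the compact set $\B_\varepsilon^n\setminus\mathring{T}_{\varepsilon,\delta}(f)$ by finitely many of the neighbourhoods supplied by Lemma~\ref{lem:balls}/Corollary~\ref{cor:balls} and set $\omega=\min_i\nu_{p_i}$; since all parameter balls are centred at $\tilde\alpha=0$, their intersection is again a ball centred at the origin. The gap is in your first step. You claim that for every $p$ outside the open solid tube the \emph{canonical} lifting $\vec u_0$ of the radial field already satisfies $\langle\vec u_0(p),p\rangle\neq 0$, deducing this from transversality of the fibres with the spheres. That inference is not valid: $\vec u_0(p)$ lies in the normal space $N_p\bigl(f^{-1}(f(p))\bigr)$, and transversality of the fibre with the sphere is a statement about the \emph{tangent} space of the fibre; it does not prevent this particular normal vector from being orthogonal to $p$. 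Moreover, if $\langle\vec u_0(p),p\rangle\neq0$ held at every such $p$, then $f^{-1}(\LL_\theta)$ would be transverse to all small spheres outside the tube, i.e.\ $f$ would already be $d$-regular there --- but Example~\ref{ex_1} gives a map with the transversality property whose $d$-regularity fails at points $(x,0,0)$ lying outside the tube. So the pointwise claim you rely on is false in general; its failure is precisely what the paper is designed to circumvent.

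What the paper actually does at each $p$ is a case split. If $\langle\vec u_0(p),p\rangle\neq0$, it proceeds as you do, with $\tilde\alpha=0$ and $\vec w=\vec 0$. If $\langle\vec u_0(p),p\rangle=0$, it argues that the fibre through $p$ is then transverse to the sphere $\BS^{n-1}_{\Vert p\Vert}$, so projecting the radial vector $x$ of $\R^n$ orthogonally onto $T_x\bigl(f^{-1}(f(x))\bigr)$ yields a nonzero $f$-vertical field $\vec w$ with $\langle\vec w(p),p\rangle>0$; the non-canonical lifting $\vec u_0^{\vec w}=\vec u_0+\vec w$ of $\vec v_0$ then has $\langle\vec u_0^{\vec w}(p),p\rangle>0$, and Lemma~\ref{lem:balls} applies with $\tilde\alpha=0$ and this $\vec w$. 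The essential design point --- which you half-anticipate in your ``main obstacle'' paragraph and then set aside --- is that the base parameter stays at $\tilde\alpha=0$ in \emph{both} cases and only the lifting (the vertical correction $\vec w$, which is decidedly \emph{not} zero in the second case, contrary to your parenthetical) varies from point to point; Definition~\ref{defi_1} quantifies over arbitrary liftings rather than the canonical one exactly to permit this. That is what keeps every parameter ball centred at the origin and lets the final intersection step go through. As written, your proposal either asserts a false pointwise claim or, via your fallback of recentering at nonzero $\tilde\alpha_p$, forfeits the conclusion that the ball of suitable parameters can be taken centred at $0$, which is the form of the statement actually needed (and used) later.
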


\begin{proof}
First we prove that for every $p \in \B_\varepsilon^n \setminus \mathring{T}_{\varepsilon,\delta}(f)$ there exist an open neighborhood $W_p$ of $p$ in $\B_\varepsilon^n \setminus \mathring{T}_{\varepsilon,\delta}(f)$,
and an open ball $\underline{\mathring{\B}}_{\nu_p}^k$ in the unit ball $\underline{\mathring{\B}}_1^k$ such that every $\alpha \in \underline{\mathring{\B}}_{\nu_p}^k$ is a suitable parameter for $f$ in $W_p$.

Fix $p \in \B_\varepsilon^n \setminus \mathring{T}_{\varepsilon,\delta}(f)$ and let $\vec u_0 (p)$ be the canonical lifting of the radial vector field $\vec v_0 (p)$. We have two cases:

\paragraph{\bf Case 1:} Suppose $\langle \vec u_0 (p),p  \rangle \neq 0$. Set $\tilde{\alpha}=0$ and $\vec{w}=\vec{0}$, the zero vector field.
Then by Lemma~\ref{lem:balls} there exist a neighbourhood $W_p$ of $p$ and $\nu_p>0$ such that $\langle \vec{u}_\alpha (x),x  \rangle \neq0$ for every $x \in W_p$ and every $\alpha \in \underline{\mathring{\B}}_{\nu_p}^k$.

\paragraph{\bf Case 2:} Suppose $\langle \vec u_0 (p),p  \rangle =0$. This means that the vector $\vec u_0 (p)$ is tangent to the sphere $\BS_{\Vert p\Vert}^{k-1}$. This implies that the fibre $f^{-1}(f(p))$ which contains $p$ and the sphere $\BS_{\Vert p\Vert}^{k-1}$ are transverse at $p$. Since transversality is a stable condition, there exist an open neighbourhood $W'_p$ of $p$ such that for all $x\in W'_p$, the fibre $f^{-1}(f(x))$ which contains $x$ and the sphere $\BS_{\Vert x\Vert}^{k-1}$ are transverse at $x$. We construct an $f$-vertical vector field on $W'_p$ as follows. For every $x\in W'_p$ let $\vec{w}'(x)=x$ be the restriction to $W'_p$ of the radial vector field on $\R^n$. Let $\vec{w}(x)$ be the projection of $\vec{w}'(x)=x\in T_x\R^n$ onto the tangent space $T_x(f^{-1}(f(x)))$ at $x$ of the fibre which contains $x$.
We have that $\vec{w}$ is a non-zero $f$-vertical vector field because of the transversality between fibres and spheres in $W'_p$ and also $\langle \vec{w}(x),x  \rangle > 0$ for every $x\in W'_p$.
Hence, the $f$-lifting $\vec{u}_0^{\vec{w}}$ of $\vec{v}_0$ determined by $\vec{w}$ satisfies $\langle\vec{u}_0^{\vec{w}}(x),x\rangle> 0$ for every $x\in W'_p$.
Then by Lemma~\ref{lem:balls} there exist a neighbourhood $W_p$ of $p$ and $\nu_p>0$ such that $\langle \vec{u}_\alpha^{\vec{w}}(x),x  \rangle \neq0$ for every $x \in W_p$ and every $\alpha \in \underline{\mathring{\B}}_{\nu_p}^k$.

The open neighbourhoods $\{W_p\}$ form an open cover of the compact set $\B_\varepsilon^n \setminus \mathring{T}_{\varepsilon,\delta}(f)$, so there is an open subcover $\{W_{p_1},\dots,W_{p_m}\}$.
Set  $\underline{\mathring{\B}}_\omega^k=\bigcap_{i=1}^m \underline{\mathring{\B}}_{\nu_{p_i}}^k$, that is $ \omega=\min_{i=1}^m\{\nu_{p_i}\}$. Thus every $\alpha\in\underline{\mathring{\B}}_\omega^k$ is a suitable parameter for $f$.
\end{proof}

Now we can state the main theorem.
By Proposition~\ref{prop_1} $\vec v_\alpha$ is a suitable vector field for $f$ for every $\alpha\in\underline{\mathring{\B}}_\omega^k$.
By Proposition~\ref{theo_main} $f$ is $d_h$-regular with respect to the conic homeomorphism $h_\alpha$ given by the conic vector field $\vec v_\alpha$.
As an immediate consequence of Theorem \ref{prop_principal}, we have:

\begin{theo}\label{thm:mainthm}
Let $f\colon (\R^n,0) \to (\R^k,0)$, with $2 \leq k \leq n$, be a locally surjective analytic map-germ with an isolated critical value at $0\in\R^k$ which satisfies the transversality property.
Then there exists a real number $0<\omega< 1$ such that for every $\alpha \in \underline{\mathring{\B}}_\omega^k$ the Milnor-L\^e fibration in the tube
\begin{equation*}
 f\colon T_{\varepsilon,\delta}(f)\to\BS_\delta^{k-1},
\end{equation*}
is equivalent to the Milnor fibration on the sphere:
\begin{equation*}
\frac{h_\alpha^{-1} \circ f}{\|h_\alpha^{-1} \circ f\|}\colon\BS_\varepsilon^{n-1} \setminus f^{-1}(0) \to \BS^{k-1}
\end{equation*}
where $h_\alpha: \B_\eta^k \to \B_\eta^k$ is the conic homeomorphism defined by the conic vector field $\vec{v}_\alpha$.
\end{theo}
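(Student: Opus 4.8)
The plan is to chain together, in order, the three main results already proved. First I would invoke the transversality hypothesis together with \cite[Theorem~5.1, Remark~5.7]{Cisneros-Seade-Snoussi:d-regular} to fix radii $0<\delta\ll\varepsilon$ for which the Milnor--L\^e fibration \eqref{eq:RMLF} on the tube $T_{\varepsilon,\delta}(f)$ exists, and then shrink $\varepsilon$ and choose $\eta,\eta_0$ so that $f(\B_\varepsilon^n)\subset\mathring{\mathcal{B}}_{\eta_0}^k$. This is the only parameter bookkeeping required, and it is immediate from the continuity of $f$ at $0$.

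Next I would apply Proposition~\ref{prop_1} to produce a real number $0<\omega<1$ such that every $\alpha\in\underline{\mathring{\B}}_\omega^k$ is a suitable parameter for $f$. Fix one such $\alpha$. Since $\|\alpha\|\le\omega<1$, Corollary~\ref{cor:alpha.small}, Proposition~\ref{prop:source.tranverse} and Lemma~\ref{lem:away} show that the conic vector field $\vec v_\alpha$ restricted to $\mathring{\B}_1^k$ has the origin as its only zero, is transverse to every sphere centred at $0$ of radius $<1$, and points away from $0$. Hence, exactly as in Section~3, the normalization $\hat v_\alpha$ integrates to a diffeomorphism $P\colon\BS_\eta^{k-1}\times(0,\eta^2]\to\B_\eta^k\setminus\{0\}$ which extends to the conic homeomorphism $h_\alpha\colon\B_\eta^k\to\B_\eta^k$; in particular $h_\alpha$ is a legitimate conic homeomorphism in the sense of Section~2.

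Now, because $\alpha$ is a suitable parameter for $f$, Proposition~\ref{theo_main} gives that $f$ is $d_{h_\alpha}$-regular (outside the open solid Milnor tube) with respect to $h_\alpha$. Applying Proposition~\ref{prop_principal} with $h=h_\alpha$ then yields that the normalized map
\[
\frac{h_\alpha^{-1}\circ f}{\|h_\alpha^{-1}\circ f\|}\colon\BS_\varepsilon^{n-1}\setminus f^{-1}(0)\to\BS^{k-1}
\]
is the projection of a smooth locally trivial fibration, equivalent both to the Milnor--L\^e fibration \eqref{eq:RMLF} on the tube and to the Milnor fibration \eqref{eq:RMF} on the sphere. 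In particular the Milnor--L\^e fibration $f\colon T_{\varepsilon,\delta}(f)\to\BS_\delta^{k-1}$ is equivalent to the displayed normalized fibration, which is precisely the assertion.

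As for the main obstacle: at this final stage there is essentially none, since all the analytic content has been absorbed into Proposition~\ref{prop_1} (the compactness argument extracting a common radius $\omega$ from the local balls, with the delicate Case~2 where the canonical radial lifting is tangent to the sphere and must be corrected by a nonzero $f$-vertical vector field) and into the integration argument of Section~3 producing $h_\alpha$. The only point deserving explicit care in writing up the proof is to record that the radius $\eta$ defining $h_\alpha$, the Milnor-tube radii $\varepsilon,\delta$, and the bound $\eta_0$ from the definition of conic homeomorphism can all be chosen simultaneously, and that the slightly weakened $d_h$-regularity used here (transversality only outside $\mathring{T}_{\varepsilon,\delta}(f)$) still suffices to apply Proposition~\ref{prop_principal}, as already noted in the remark following \eqref{eq:fh-fib}.
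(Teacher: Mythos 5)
Your proposal is correct and follows essentially the same route as the paper: the authors likewise obtain Theorem~\ref{thm:mainthm} by chaining Proposition~\ref{prop_1} (existence of the radius $\omega$ of suitable parameters), Proposition~\ref{theo_main} ($d_{h_\alpha}$-regularity for a suitable parameter $\alpha$), and Proposition~\ref{prop_principal} (the normalized fibration and its equivalence to the tube and sphere fibrations). Your extra remarks on the simultaneous choice of $\varepsilon,\delta,\eta,\eta_0$ and on the weakened $d_h$-regularity are consistent with the paper's remark following \eqref{eq:fh-fib} and add nothing that conflicts with the authors' argument.
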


The Theorem stated in the Introduction follows from Theorem~\ref{thm:mainthm}.

\section*{Acknowledgments}
We thank Prof. José Seade for many enlightening conversations.

\bibliography{mypapers,mymatbk,matart}
\bibliographystyle{plain}

\end{document}